  \setlist[itemize]{leftmargin=*}
  \setlist[enumerate]{leftmargin=*}
\DeclareMathAlphabet{\mathpzc}{OT1}{pzc}{m}{it}
\theoremstyle{definition}
\newtheorem{example}{Example}
\newcommand{\TheTitle}{Finite element discretization of weighted $\Phi$-Laplace problems}
\newcommand{\ShortTitle}{Finite element discretization of weighted $\Phi$-Laplace problems}
\newcommand{\TheAuthors}{E. Ot\'arola, A. J. Salgado}
\headers{\ShortTitle}{\TheAuthors}
\title{{\TheTitle}\thanks{EO has been partially supported by ANID grant FONDECYT-1220156. AJS has been partially supported by NSF grant DMS-2409918.}}
\author{
Enrique Ot\'arola\thanks{Departamento de Matem\'atica, Universidad T\'ecnica Federico Santa Mar\'ia, Valpara\'iso, Chile. (\email{enrique.otarola@usm.cl}, \url{http://eotarola.mat.utfsm.cl/})}
\and
Abner J. Salgado\thanks{Department of Mathematics, University of Tennessee, Knoxville, TN 37996, USA. (\email{asalgad1@utk.edu}, \url{https://math.utk.edu/people/abner-j-salgado/})}}
\date{Draft version of \today.}
\begin{document}

\maketitle
\begin{abstract}
We study the finite element approximation of problems involving the weighted $\Phi$-Laplacian, where $\Phi$ is an $N$-function and the weight belongs to the class $A_\Phi$. In particular, we consider a boundary value problem and an obstacle problem
and derive error estimates in both cases.
The analysis is based on the language of weighted Orlicz and weighted Orlicz--Sobolev spaces.
\end{abstract}

\begin{keywords}
nonlinear elliptic equations, weighted $\Phi$-Laplacian, obstacle problem, Muckenhoupt weights, finite element discretizations, quasi norms, a priori error estimates.
\end{keywords}

\begin{AMS}
35J60,         
35J70,         
65N15,         
65N30.         
\end{AMS}

\section{Introduction}
\label{sec:intro}

Let $d \in \mathbb{N}$ with $d \geq 2$, and let $\Omega$ be a bounded polytope in $\mathbb{R}^d$ with Lipschitz boundary $\partial \Omega$. We study finite element methods for approximating the unconstrained and constrained minimization of the following convex energy functional
\begin{equation}
\label{eq:DefOfJ}
  \mathcal{J}(v) \coloneqq \int_{\Omega} \omega(x) \Phi(|\nabla v(x)|)\mathrm{d}x - \int_{\Omega} \omega(x) f(x) v(x) \mathrm{d}x,
\end{equation}
over an appropriate function class that, in particular, satisfies homogeneous Dirichlet boundary conditions. Here $\Phi$ is a suitable convex function, $\omega$ is a weight belonging to a suitable class adapted to $\Phi$, and $f$ is a given forcing term; see Section~\ref{sec:notation_and_prel} for details.

The idea behind the study of minimization problems for energies such as \eqref{eq:DefOfJ} is that in the resulting differential operator, i.e.,
\[
  D \mathcal{J}(v) = - \DIV ( \calbA (x,\nabla v) ),
\]
the vector field $\calbA$ may have a nonstandard growth through $\Phi$ and is not translation invariant due to the dependence on the point $x$. The only simplifying assumption we make is that we consider each of these phenomena separately in the sense that
\begin{equation}
\label{eq:Splitxvec}
  \calbA (x,\bzeta) = \omega(x) \bA(\bzeta), \qquad
  \mathbf{A}: \mathbb{R}^d \rightarrow \mathbb{R}^d,
  \qquad
  \mathbf{A}(\bzeta) = \Phi'(|\bzeta|) \frac{\bzeta}{|\bzeta|}
\end{equation}
for a.e.~$x \in \Omega$ and all $\bzeta \in \mathbb{R}^d$. The prototypical example of the type of problem we have in mind is the so-called weighted $p$-Laplacian: for $p \in (1,\infty)$ and $\kappa \geq 0$, we have
\begin{equation}
\label{eq:p-delta_structure}
 \mathbf{A}(\bzeta) = (\kappa + |\bzeta|)^{p-2} \bzeta \qquad \forall \bzeta \in \mathbb{R}^d.
\end{equation}

The analysis and numerical approximation of quasilinear problems in general and the $p$-Laplacian in particular have a rich history, and any attempt to give a complete bibliographical account is doomed to failure. We mention here only the classical references \cite{MR1192966} and \cite{MR2317830}. In particular in \cite{MR2317830} the authors introduce the language of Orlicz functions to perform an error analysis.
For the approximation of obstacle problems with the $p$-Laplacian as operator, we can refer, for instance, to \cite{MR3852613,MR3745178}.

On the other hand, the study of elliptic problems with nonstandard growth and inhomogeneity has received much attention in recent years, as this seems not only to adopt the difficulties of certain scenarios, but also to unify certain particular structures; see \cite{MR4705246,MR3516828,MR3830720}. It is therefore only natural to investigate the approximation of solutions to such problems. This work can therefore be seen as a step towards the numerical approximation of elliptic problems in Orlicz--Musielak spaces.

Our presentation is as follows. Section~\ref{sec:notation_and_prel} introduces notation and gives an overview of the main ingredients we will use. The goal is to properly describe the functional framework we use: weighted Orlicz and Orlicz--Sobolev spaces. The unconstrained minimization of \eqref{eq:DefOfJ} and its numerical approximation are discussed in Sections~\ref{sec:WPhiLap} and \ref{sec:FEM}, respectively. The analysis hinges on the properties of the Scott-Zhang operator on weighted Orlicz spaces. Finally, an obstacle problem related to \eqref{eq:DefOfJ} and its numerical approximation are studied in Section~\ref{sec:Obstacle}. In this case, the analysis relies on a positivity preserving interpolant and its properties. This operator is analyzed in Section~\ref{sub:Crhn}.

\section{Notation and preliminary remarks}
\label{sec:notation_and_prel}

The first notation we introduce is the relation $A \lesssim B$. This means that $A \leq C B$ for a nonessential constant $C>0$, which can change at each occurrence. The relation $A \gtrsim B$ means $B \lesssim A$, and $A \simeq B$ is the short form for $A \lesssim B \lesssim A$. If it is necessary to explicitly mention a constant $C$, we assume that $C>0$ and that its value can change at each occurrence.

Let $E \subset \mathbb{R}^d$ be a measurable set, and let $|E|$ be the $d$-dimensional Lebesgue measure of $E$. If $0 < |E| < \infty$ and $v \in L^1_{\mathrm{loc}}(\mathbb{R}^d)$, we denote the mean value of $v$ over the set $E$ by
\[
 \langle v \rangle_{E}\coloneqq \fint_E v(x) \diff x = \frac1{|E|} \int_E v(x) \diff x.
\]
Here and in the following, the relation $A \coloneqq B$ indicates that $A$ is \emph{by definition} equal to $B$. The notation $B \eqqcolon A$ means $A \coloneqq B$.

\subsection{$N$-functions, complementary functions, and shifted $N$-functions}

We say that $\Phi:[0,\infty) \rightarrow [0,\infty)$ is an $N$-function if $\Phi$ is differentiable and its derivative $\Phi'$ satisfies the following properties: $\Phi'(0)=0$, $\Phi'(s)>0$ for $s>0$, $\Phi'$ is right-continuous at any point $s \geq 0$, $\Phi'$ is nondecreasing on $[0,\infty)$, and $\lim_{s \rightarrow \infty} \Phi'(s) = \infty$ \cite[Definition 4.2.1]{MR3024912}. We note that every $N$-function is convex \cite[Lemma 4.2.2]{MR3024912}, \cite[page 7]{MR126722}. We say that an $N$-function $\Phi$ satisfies the $\Delta_2$-condition (we shall write $\Phi \in \Delta_2$) if there exists a positive constant $K$ such that $\Phi(2t) \leq K \Phi(t)$ for all $t \geq 0$. The smallest of these constants is denoted by $\Delta_2(\Phi)$ \cite[Definition 4.4.1]{MR3024912}, \cite[page 23]{MR126722}. Since $\Phi(t) \leq \Phi(2t)$, the $\Delta_2$-condition guarantees that $\Phi(t) \simeq \Phi(2t)$ for all $t \geq 0$. More generally, if $\Delta_2(\Phi) < \infty$ and $a > 1$ is fixed, then $\Phi(t) \simeq \Phi(at)$ for all $t \geq 0$ \cite[Exercise 4.5.5]{MR3024912}, \cite[inequality (4.2)]{MR126722}. Finally, we mention that if $\Phi$ is an $N$-function and $\Delta_2(\Phi) < \infty$, then $\Phi(t) \simeq \Phi'(t) \, t$ uniformly in $t \geq 0$ \cite[Proposition 7(2.6b)]{Kreuzer}.

To characterize the behavior of an $N$-function $\Phi$ for very small and very large values of its argument, we introduce the lower and upper indices, respectively, as \cite[Definition 1.1.5]{MR1156767}, \cite[page 71]{MR2797562}
\begin{equation}
\label{eq:DefofIndices}
  i(\Phi) \coloneqq \lim_{\lambda \downarrow 0} \frac1{\log\lambda} \log\left( \sup_{t>0} \frac{\Phi(\lambda t)}{\Phi(t)} \right), 
  \qquad
  I(\Phi) \coloneqq \lim_{\lambda \uparrow \infty} \frac1{\log\lambda} \log\left( \sup_{t>0} \frac{\Phi(\lambda t)}{\Phi(t)} \right).
\end{equation}
It follows from the definition that
$1 \leq i(\Phi) \leq I(\Phi) \leq \infty$. Moreover, if $\Phi \in \Delta_2$, then $1 \leq i(\Phi) \leq I(\Phi) < \infty$ \cite[page 71]{MR2797562}. Finally, we have that for every $\epsilon >0$ there are constants $c_\epsilon$ and $C_\epsilon$ satisfying $0<c_\epsilon \leq C_\epsilon$ such that, for every $s,t \geq 0$ \cite[page 114]{MR1155360},
\begin{equation}
\label{eq:UseofIndices}
  c_\epsilon \min\{ t^{i(\Phi)-\epsilon}, t^{I(\Phi) + \epsilon} \} \Phi(s) \leq \Phi(st) \leq C_\epsilon \max\{ t^{i(\Phi)-\epsilon}, t^{I(\Phi)+\epsilon} \}\Phi(s).
\end{equation}

\subsubsection{Complementary functions}
Let $\Phi$ be an $N$-function. Define
$
(\Phi')^{-1}:[0,\infty) \rightarrow [0,\infty)
$
such that
$
(\Phi')^{-1}(t) \coloneqq \sup \{ s : \Phi'(s) \leq t \}
$
\cite[Definition 4.3.1]{MR3024912}. If $\Phi'$ is continuous and strictly increasing in $[0,\infty)$, then $(\Phi')^{-1}$ is the inverse function of $\Phi'$ and vice-versa \cite[Remark 4.3.3]{MR3024912}. We now define \cite[Definition 4.3.1]{MR3024912}
\[
\Phi^{*}:[0,\infty) \rightarrow [0,\infty),
\qquad
\Phi^{*}(t) \coloneqq \int_0^t (\Phi')^{-1}(s) \mathrm{d}s.
\]
The function $\Phi^{*}$ is called the \emph{complementary function} of $\Phi$. We note that $\Phi^{*}$ is also an $N$-function, $(\Phi^{*})'(t) = (\Phi')^{-1}(t)$ for $t >0$, and  $(\Phi^{*})^{*} = \Phi$.

\begin{example}[the $p$-Laplacian]\rm
  Let $p\in (1,\infty)$ and $\kappa \geq 0$. We define the $N$-functions $\Phi_p:[0,\infty) \rightarrow [0,\infty)$ and $\Phi_{p,\kappa}:[0,\infty) \rightarrow [0,\infty)$ as
  \begin{equation}
  \label{eq:examples}
    \Phi_p(t) \coloneqq \frac{1}{p}t^p,
    \qquad
    \Phi_{p,\kappa}(t) \coloneqq \int_0^t \Phi'_{p,\kappa}(s) \mathrm{d}s,
    \qquad
    \Phi'_{p,\kappa}(t) \coloneqq (\kappa + t)^{p-2}t.
  \end{equation}
  We note that both $\Phi_{p,\kappa}$ and $\Phi_{p}$ satisfy the $\Delta_2$-condition with $\Delta_2(\Phi_{p,\kappa}) \leq C 2^{\max\{ 2,p\}}$ and $\Delta_2(\Phi_p) = 2^p$; see \cite[page 376]{MR2914267}. Note that the first estimate is independent of $\kappa$. The corresponding complementary functions are
  \[
    \Phi_p^{*}(t) = \frac{1}{p'}t^{p'},
    \qquad
    \Phi_{p,\kappa}^{*}(t) \simeq (\kappa^{p-1} + t)^{p'-2}t^2,
    \qquad
    t \in [0,\infty),
    \qquad
    \frac{1}{p} + \frac{1}{p'} = 1.
  \]
  According to \cite[page 376]{MR2914267}, we have that $\Delta_2(\Phi_p^{*}) = 2^{p'}$ and $\Delta_2(\Phi_{p,\kappa}^{*}) \leq C 2^{\max\{ 2,p'\}}$.
\end{example}

\subsubsection{Young's inequalities}

If $(\Phi, \Phi^{*})$ are complementary $N$-functions, then
\begin{equation}
\label{eq:Young's_inequality}
  st \leq \Phi(s) + \Phi^{*}(t), \qquad \forall s,t \geq 0.
\end{equation}
The equality holds if and only if $t = \Phi'(s)$ or $s = (\Phi')^{-1}(t)$ \cite[Theorem 4.3.4]{MR3024912}. This is commonly referred to as \emph{Young's inequality}. We also have the following refined versions: If $\Phi,\Phi^{*} \in \Delta_2$, then for all $\delta > 0$ there exists $C_{\delta}>0$, depending on $\delta$, $\Delta_2(\Phi)$, and $\Delta_2(\Phi^{*})$ such that \cite[Lemma 32]{MR2418205}
\begin{align}
\label{eq:refined_Young's_inequality_1}
  st \leq \delta \Phi(s) + C_{\delta} \Phi^{*}(t) \quad \forall s,t \geq 0,
    \\
\label{eq:refined_Young's_inequality_2}
  s \Phi'(t) + \Phi'(s)t \leq \delta \Phi(s) + C_{\delta} \Phi(t)\quad \forall s,t \geq 0.
\end{align}
Note that inequality \eqref{eq:Young's_inequality} is a generalization of the classical Young's inequality,
whereas \eqref{eq:refined_Young's_inequality_1} is a generalization of ``Young's inequality with $\delta$''.

\subsubsection{Shifted $N$-functions}
Given an $N$-function $\Phi$ with $\Phi,\Phi^* \in \Delta_2$, we introduce the family of \emph{shifted} functions $\{ \Phi_a \}_{a \geq 0}$ as in \cite[Definition 22]{MR2418205}:
\begin{equation}
\label{eq:shifted_N_functions}
  \Phi_a: [0,\infty) \rightarrow [0,\infty),
  \quad
  \Phi_a(t) \coloneqq \int_0^t \Phi_a'(s) \mathrm{d}s,
  \quad
  \Phi_a'(t)\coloneqq \Phi'(a + t) \frac{t}{a+t},
  \,
  t \geq 0.
\end{equation}
For all $a \geq 0$, $\Phi_a$ and $\Phi_a^{*}$ are $N$-functions and $\Phi_a,\Phi_a^{*} \in \Delta_2$. More importantly,
\[
  \sup_{a \geq 0} \left\{ \Delta_2(\Phi_a) \right\}_{a\geq0} \cup \left\{ \Delta_2(\Phi_a^*) \right\}_{a\geq0} \leq C( \Delta_2(\Phi), \Delta_2(\Phi^*) );
\]
see \cite[Lemmas 23 and 27]{MR2418205} and \cite[Lemma 6.1]{MR2317830}.

\begin{example}[the shifted $p$-Laplacian]\rm
  Recall the function $\Phi_{p,\kappa}$ introduced in \eqref{eq:examples}. In this case we have, for $a \geq 0$ and $t \geq 0$,
  \[
    \Phi_{p,\kappa,a}(t) \simeq (\kappa + a + t)^{p-2}t^2,
    \quad
    \Phi_{p,\kappa,a}^{*}(t) \simeq ( (\kappa + a)^{p-1} + t)^{p'-2}t^2.
  \]
  For all $a \geq 0$, $\Delta_2( \Phi_{p,\kappa,a} ) \leq C 2^{\max\{ 2,p \}}$ and $\Delta_2( \Phi^{*}_{p,\kappa,a} ) \leq C 2^{\max\{ 2, p \}}$. Thus, $\{ \Phi_a \}_{a \geq 0}$ and $\{ \Phi^*_a \}_{a \geq 0}$ satisfy the $\Delta_2$-condition uniformly with respect to $a$; see \cite[page 376]{MR2914267}.
\end{example}

\subsection{Orlicz spaces}
\label{sub:Orlicz}
Let $\mu$ be a Borel measure on $\R^d$ that is absolutely continuous with respect to the Lebesgue measure. We denote by $L^0(\Omega)$ the set of all Lebesgue measurable functions in $\Omega$. Let $\Phi$ be an $N$-function. We define $\varrho: L^0(\Omega) \rightarrow \mathbb{R}$ by
\[
  \varrho(v, \Phi) \coloneqq \int_\Omega \Phi(|v(x)|) \mathrm{d}\mu(x).
\]
The function $\varrho$ is a semimodular on $L^0(\Omega)$. Moreover, if $\Phi$ is positive, then $\varrho$ is a modular \cite[Lemma 2.3.10]{MR2790542}. We define the \emph{Orlicz space} \cite[Sec. 3.1, Definition 5]{MR1113700}
\[
  L^\Phi(\mu,\Omega )\coloneqq \left\{ v \in L^0(\Omega): \exists k > 0 : \varrho( k v, \Phi) < \infty  \right\}
\]
equipped with the Luxemburg norm (see \cite[Sec. 3.2, (6) and Theorem 3]{MR1113700})
\begin{equation}
   \| v \|_{L^\Phi(\mu,\Omega)} \coloneqq \inf \left \{  k > 0 : \varrho\left(\frac{v}{k},\Phi \right) \leq 1 \right \}.
 \label{eq:Luxemburg}
\end{equation}
The following is a list of properties of the space $L^\Phi(\mu,\Omega)$:
\begin{itemize}
  \item $L^\Phi(\mu,\Omega)$ is a Banach space \cite[Sec. 3.3, Theorem 10]{MR1113700}.

  \item If $\Phi \in \Delta_2$, then (see \cite[Sec. 3.1, Theorem 2 and Sec. 3.3, Proposition 3]{MR1113700})
  \[
    L^\Phi(\mu,\Omega) = \left\{ v \in L^0(\Omega): \varrho( v, \Phi) < \infty \right\}.
  \]

  \item If $\Phi \in \Delta_2$, then $L^\Phi(\mu,\Omega)$ is separable \cite[Sec. 3.5, Thm. 1 and Sec. 3.4, Cor. 5]{MR1113700}.

  \item If $(\Phi,\Phi^*)$ is a pair of complementary $N$-functions, then we have H\"older's inequality \cite[Sec. 3.3, Proposition 1 and (4)]{MR1113700} (the constant $2$ cannot be omitted):
 \[
    \int_\Omega | v(x) w(x)| \mathrm{d}\mu(x) \leq 2 \| v \|_{L^\Phi(\mu,\Omega)}\| w \|_{L^{\Phi^*}(\mu,\Omega)} \qquad \forall v \in L^\Phi(\mu,\Omega),\ \forall w \in L^{\Phi^*}(\mu,\Omega).
 \]

 \item If $( \Phi,\Phi^* )$ is a pair of complementary $N$-functions and $\Phi, \Phi^* \in \Delta_2$, then $[L^{\Phi}(\mu,\Omega)]^*$ is isomorphic to $L^{\Phi^*}(\mu,\Omega)$ in the following sense \cite[Sec. 4.1, Theorem 7]{MR1113700}:
  \[
    w \in L^{\Phi^*}(\mu,\Omega) \mapsto J_w \in [L^{\Phi}(\mu,\Omega)]^*:
    \quad
    J_w(v) \coloneqq \int_{\Omega} v(x)w(x) \mathrm{d}\mu(x), \ v \in L^{\Phi}(\mu,\Omega),
  \]
  and $\| J_w \|_{L^{\Phi}(\mu,\Omega)^*} \leq 2 \| w \|_{L^{\Phi^*}(\mu,\Omega)}$. $L^{\Phi}(\mu,\Omega)$ is reflexive \cite[Sec. 4.1, Thm. 10]{MR1113700}.
\end{itemize}

For further facts and properties of Orlicz spaces and their generalizations, we refer the reader to \cite{MR126722,MR1113700,MR2790542,MR3931352,MR3889985}.

\subsection{Weights}
\label{sec:A_p_weights}
A weight $\omega$ is a function in $L^1_{\mathrm{loc}}(\R^d)$ such that $\omega(x)>0$ for a.e.~$x \in \mathbb{R}^d$. For a weight $\omega$ and a measurable set $A \subset \mathbb{R}^d$, we set $\omega(A) \coloneqq \int_A \omega(x) \diff x$.

Let $p \in (1,\infty)$. We say that a weight $\omega$ belongs to the Muckenhoupt class $A_p$ if there is a positive constant $C$ such that for every ball $B \subset \R^d$ \cite[Definition 1.2.2]{MR1774162}
\begin{equation}
\label{eq:A_p_condition}
  \left( \fint_B \omega(x) \diff x \right) \left( \fint_B \omega(x)^{-\tfrac1{p-1}} \diff x \right)^{p-1} \leq C.
\end{equation}
The infimum over all such constants $C$ is called the Muckenhoupt characteristic of $\omega$ and is denoted by $[\omega]_{A_p}$. We note that
\[
  \omega \in A_p \iff \omega' \coloneqq \omega^{-\tfrac1{p-1}} \in A_{p'},
 \quad
 [\omega]_{A_p} = [\omega']_{A_{p'}}^{p-1},
 \quad
 p \in (1,\infty),
 \quad
 \frac{1}{p} + \frac{1}{p'} = 1;
\]
see \cite[Remark 1.2.4, item 4]{MR1774162}.
Many useful properties follow from the fact that $\omega \in A_p$. We mention here two of them that will be useful for us in what follows:

\begin{itemize}
  \item \emph{Open ended property}: If $\omega \in A_p$ with $p\in (1,\infty)$, then there is $\delta >0$ such that $\omega \in A_{p-\delta}$; see \cite[Corollary 1.2.17]{MR1774162} and \cite[Corollary 7.6, item (2)]{MR1800316}.
  
  \item \emph{Embedding:} If $1 < p < q < \infty$, then $A_p \subset A_q$; see \cite[Remark 1.2.4, item 3]{MR1774162}.
\end{itemize}

We refer to \cite{MR1774162,MR1800316,MR2305115,MR3243734,MR2797562} for more details on Muckenhoupt weights. 

When dealing with the obstacle problem we restrict ourselves to weights that behave well near $\partial\Omega$. The following definition is inspired by \cite[Definition 2.5]{MR1601373}.

\begin{definition}[$A_p(\Omega)$]
Let $\Omega \subset \R^d$ be bounded and Lipschitz, and let $p \in (1,\infty)$. Given $\omega \in A_p$, we say $\omega \in A_p(\Omega)$ if there is an open $\calG \subset \Omega$, and $\varepsilon,\omega_l>0$ such that
$
\{ x \in \Omega : \dist(x,\partial\Omega) < \varepsilon \} \subset \calG,
$
$
\omega_{|\bar\calG} \in C(\bar\calG),
$
and
$
\inf_{x \in \bar\calG} \omega(x) \geq \omega_l$.
\label{def:A_p(Omega)}
\end{definition}
\subsection{Weighted Orlicz spaces}
\label{sub:WOrlicz}

We have finally reached the point where we can describe the functional framework that we will use in our analysis. Namely, that of \emph{weighted Orlicz spaces}. Let $\omega$ be a weight. Define the measure $\mu$ via $\diff \mu(x) \coloneqq \omega(x) \diff x$. Given an $N$-function $\Phi$, we define the weighted Orlicz space
\[
   L^{\Phi}(\omega,\Omega) \coloneqq \left\{ v \in L^0(\Omega): \exists k > 0 : \ \varrho(k v,\Phi) \coloneqq \int_{\Omega} \omega(x) \Phi(k|v(x)|) \mathrm{d}x < \infty \right\}.
\]
We endow $L^{\Phi}(\omega,\Omega)$ with the Luxemburg norm  \eqref{eq:Luxemburg}. Some properties of this space have already been described in Section~\ref{sub:Orlicz}. Further properties are examined below.

\subsubsection{The class $B_\Phi$}
To define weak derivatives of functions in a weighted Orlicz space, such functions must be distributions. To describe when this is the case, we introduce a class of weights adapted to a given $N$-function $\Phi$. We say that the weight $\omega \in B_\Phi$ if for every ball $B \subset \R^d$ there is a constant $\mu>0$ such that
\begin{equation}
\label{eq:BPhi}
  \frac1\mu \int_B \Phi^*\left( \frac\mu{\omega(x)} \right)\omega(x) \diff x < \infty.
\end{equation}
Condition \eqref{eq:BPhi} is a generalization of the condition that A.~Kufner introduced in \cite[(1.5)]{MR775568} in the context of weighted Sobolev spaces, and it occurs naturally when proving that $L^{\Phi}(\omega,\Omega) \subset L^1_{\mathrm{loc}}(\Omega)$. The proof of this embedding result can be found in \cite[Proposition 3.6 and 3.7]{MR1155360}, where it is shown that $\omega \in B_\Phi$ is not only sufficient but also necessary for the embedding. Because of its simplicity, we give a proof of sufficiency for the sake of completeness.

\begin{proposition}[$L^{\Phi}(\omega,\Omega) \subset L^1_{\mathrm{loc}}(\Omega)$]
\label{prop:LPhiareDist}
Let $\Phi$ be an $N$-function. If $\omega \in B_\Phi$, then every function in $L^{\Phi}(\omega,\Omega)$ is locally integrable in $\Omega$.
\end{proposition}
\begin{proof}

Let $K \subset \Omega$ be compact, let $v \in L^{\Phi}(\omega,\Omega)$, and let $\mu >0$. According to Young's inequality \eqref{eq:Young's_inequality}, we have
\begin{align*}
  \int_{K} |v(x)| \mathrm{d}x &=  \frac1\mu \int_{K} \mu \omega(x)^{-1} |v(x)| \omega(x)  \mathrm{d}x \\
    &\leq \frac1\mu \left[ \int_K \Phi^*\left( \frac\mu{\omega(x)} \right) \omega(x) \diff x + \int_{K} \omega(x) \Phi( |v(x)| ) \mathrm{d}x \right].
\end{align*}
The condition $\omega \in B_\Phi$ controls the first term on the right hand side of the previous inequality, and this shows the result.
\end{proof}

\begin{remark}[class $B_\Phi$]
\label{rem:classBPhi}
Let us consider, for $p \in (1,\infty)$, the function $\Phi_p$ introduced in \eqref{eq:examples}. In this case we have that $\omega \in B_{\Phi_p}$ if
\begin{align*}
    \int_B \Phi_p^*\left( \frac\mu{\omega(x)} \right)\omega(x) \diff x &= 
    \int_B \frac1{p'}\left( \frac\mu{\omega(x)} \right)^{p'}\omega(x) \diff x 
    \simeq \int_B \omega(x)^{-\tfrac1{p-1}} \diff x \leq C.
\end{align*}
In the language of \cite[Definition 1.4]{MR775568}, the finiteness of the last integral for all balls $B \subset \R^d$ means that $\omega \in B_p$. As \cite[Theorem 1.5]{MR775568} shows, $\omega \in B_p$ is sufficient for the inclusion $L^p(\omega,\Omega) \subset L^1_{\mathrm{loc}}(\Omega)$. In this sense, the class $B_\Phi$ stands as the natural extension of the class $B_p$. Note also that $A_p \subset B_p$.
\end{remark}

\subsubsection{The maximal function}
Given a function $v \in L^1_{\mathrm{loc}}(\R^d)$, the Hardy--Littlewood maximal function is defined as
\[
  M[v](x) \coloneqq \sup_{B \ni x } \fint_B |v(y)| \diff y.
\]
The continuity of this maximal function is a very important step in the analysis of a function space. Given an $N$-function $\Phi$ and a weight $\omega$, we say that $\omega \in A_\Phi$ if and only if there is $C>0$ such that for all $\delta >0$ and for all balls $B \subset \R^d$ we have
\begin{equation}
\label{eq:APhi}
  \left[ \fint_B \delta \omega(x) \diff x \right] \Phi'\left( \fint_B (\Phi')^{-1} \left( \frac1{\delta\omega(x)} \right) \diff x \right) \leq C.
\end{equation}

The significance of condition \eqref{eq:APhi} is the content of the following result.

\begin{theorem}[continuity]
\label{thm:MaximalCont}
Let $\Phi$ be an $N$-function such that $\Phi,\Phi^* \in\Delta_2$, and let $\omega$ be a weight. Then, the following statements are equivalent:
\begin{enumerate}
  \item $\omega \in A_\Phi$.
  
  \item $\omega \in A_{i(\Phi)}$.
  
  \item There is a constant $C>0$ such that
  \begin{equation}
  \label{eq:MaximalIsContinuous}
    \int_{\R^d} \Phi( M[v](x) ) \omega(x) \diff x \leq C \int_{\R^d} \Phi(|v(x)|) \omega(x) \diff x
    \quad
    \forall v \in L^\Phi(\omega,\R^d),
  \end{equation}
  where $C$ depends on $\Phi$ only through $\Delta_2(\{ \Phi , \Phi^* \})$ and on $\omega$ only through $[\omega]_{A_{i(\Phi)} }$.
\end{enumerate}
\end{theorem}
\begin{proof}
  This is the content of \cite[Theorem 1]{MR667316} and \cite[Theorem 2.1.1]{MR1156767}. The dependence on the constant $C$ in \eqref{eq:MaximalIsContinuous} is not explicitly stated, but can be inferred from the proof of this result.
\end{proof}

\begin{remark}[$A_\Phi$ is a natural generalization of the classes $A_p$]
  Let us consider the function $\Phi_p$ of \eqref{eq:examples} for $p \in (1,\infty)$. The weight $\omega$ belongs to $A_{\Phi_p}$ if and only if
  \begin{multline*}
    \left[ \fint_B \delta \omega(x) \diff x \right] \Phi_p'\left( \fint_B (\Phi_p')^{-1} \left( \frac1{\delta\omega(x)} \right) \diff x \right) = \\
    \left[\fint_B \omega(x) \diff x \right] \left[ \fint_B \omega(x)^{-\tfrac1{p-1}} \diff x \right]^{p-1} \leq C,
  \end{multline*}
  for all balls $B \subset \mathbb{R}^d$, i.e., $A_{\Phi_p} = A_p$. Thus, $A_\Phi$ is not only the natural generalization of the classes $A_p$, \emph{but also the most general condition} that can be imposed to have a reasonable theory and numerical analysis for PDEs in weighted Orlicz spaces.
  \end{remark}

  \begin{remark}[$A_\Phi \subset B_\Phi$]
  \label{rem:APhisubsetBPhi}
  As we have already mentioned, for any $N$-function $\Phi$, $(\Phi')^{-1} = (\Phi^*)'$. Since $(\Phi^*)'$ is nondecreasing on $[0,\infty)$, $\omega \in A_\Phi$ therefore implies that
  \begin{align*}
    \frac1\mu \int_B \Phi^*\left( \frac\mu{\omega(x)} \right)\omega(x) \diff x &= 
    \frac1\mu \int_B \int_0^{\tfrac\mu{\omega(x)}} (\Phi^*)'(t) \diff t \ \omega(x) \diff x \\
    &\leq
    \frac1\mu \int_B (\Phi^*)'\left( \frac\mu{\omega(x)} \right) \frac\mu{\omega(x)} \omega(x) \diff x \\
    &= \int_B (\Phi^*)'\left( \frac\mu{\omega(x)} \right) \diff x 
    = \int_B (\Phi')^{-1}\left( \frac\mu{\omega(x)} \right) \diff x < \infty,
  \end{align*}
  because the last integral is the argument inside $\Phi'$ in the definition of the class $A_\Phi$. In other words, we have obtained that $A_\Phi \subset B_\Phi$; see Remark~\ref{rem:classBPhi}.
\end{remark}

\subsubsection{Weighed Orlicz--Sobolev spaces}
Let $\Phi$ be an $N$-function, and let $\omega \in A_\Phi \subset B_\Phi$; see Remark \ref{rem:APhisubsetBPhi}. Then, as shown in Proposition~\ref{prop:LPhiareDist}, the elements of the space $L^{\Phi}(\omega,\Omega)$ are distributions and they have distributional derivatives. We can then define the \emph{weighted Orlicz--Sobolev space} \cite[Definition 3.1.1]{MR3889985}
\[
 W^{1,\Phi}(\omega,\Omega) \coloneqq \{ v \in L^{\Phi}(\omega,\Omega): \partial_i v \in L^{\Phi}(\omega,\Omega) \,\, \forall i \in \{1,\ldots,d\} \}
\]
endowed with the norm $\| v \|_{W^{1,\Phi}(\omega,\Omega)} \coloneqq \| v \|_{L^{\Phi}(\omega,\Omega)} + \| \nabla v \|_{L^{\Phi}(\omega,\Omega)}$.
Many of the properties of the classical Sobolev spaces extend to the weighted Orlicz--Sobolev setting:
\begin{enumerate}
 \item $W^{1,\Phi}(\omega,\Omega)$ is a Banach space \cite[Theorem 6.1.4(b)]{MR3931352}.
 \item If $\Phi \in \Delta_2$, then $W^{1,\Phi}(\omega,\Omega)$ is separable \cite[Theorem 6.1.4(c)]{MR3931352}.
 \item If $\Phi , \Phi^{*} \in \Delta_2$, then $W^{1,\Phi}(\omega,\Omega)$ is reflexive \cite[Theorem 6.1.4(d)]{MR3931352}.
\end{enumerate}

We define $W_0^{1,\Phi}(\omega,\Omega)$ as the closure of $C_0^{\infty}(\Omega) \cap W^{1,\Phi}(\omega,\Omega)$ in $W^{1,\Phi}(\omega,\Omega)$ \cite[Definition 6.1.8]{MR3931352}. It follows that this space has the same properties as $W^{1,\Phi}(\omega,\Omega)$; see \cite[Theorem 6.1.9]{MR3931352}. In addition, the following modular Poincar\'e inequality holds.

\begin{proposition}[modular Poincar\'e inequality]
Let $\Phi$ be an $N$-function such that $\Phi, \Phi^* \in \Delta_2$. If $\omega \in A_\Phi$, then
\begin{equation}
\label{eq:WModularPoincare}
  \int_{\Omega} \omega(x) \Phi( |v(x)| ) \mathrm{d}x \lesssim \int_{\Omega} \omega(x) \Phi (| \nabla v(x) |) \mathrm{d}x
  \quad
  \forall v \in W^{1,\Phi}_0(\omega,\Omega).
\end{equation}
\end{proposition}
\begin{proof}
The result follows directly from \cite[Theorem 4.15]{MR2797562}. For the sake of completeness, we provide some context and explanations.
\begin{itemize}
  \item \cite[Theorem 4.15]{MR2797562} begins by letting $\mathcal{B}$ be a Muckenhoupt basis that is $A_{p,\mathcal{B}}$ open. In our context, $\mathcal{B}$ is nothing but the set of all balls $B\subset \R^d$, so that $A_{p,\mathcal{B}} = A_p$, the standard Muckenhoupt class. Thus, the fact that $A_{p,\mathcal{B}}$ is open is the open ended property of the class $A_p$ stated at the end of Section~\ref{sec:A_p_weights}.

  \item The next step is to provide a class $\mathcal{F}$ of pairs $(f,g)$ of nonnegative measurable functions that are not identically zero, so that for some $p_0 \in [1,\infty)$ and all $w_0 \in A_{p_0}$
  \[
    \int_{\R^d} f(x)^{p_0} w_0(x) \diff x \leq C \int_{\R^d} g(x)^{p_0} w_0(x) \diff x.
  \]
  With
  $
    \calF \coloneqq \left\{ (|w|,|\GRAD w|) : w \in C_0^\infty(\Omega) \right\}
  $,
  the inequality is a weighted Poincar\'e inequality in $\Omega$ that holds for any $p_0 \in (1,\infty)$ and all $w_0 \in A_{p_0}$ \cite[Theorem 1.3]{MR0643158}.

  \item In \cite[Theorem 4.15]{MR2797562}, it is assumed that $\Phi$ is an $N$-function such that $1 < i(\Phi) \leq I(\Phi) < \infty$. We note that $1 < i(\Phi) \leq I(\Phi) < \infty$ is equivalent to the requirement that $\Phi, \Phi^* \in \Delta_2$ \cite[page 71]{MR2797562}.
  Let us also note that in \cite{MR2797562} the complementary function of $\Phi$ is denoted by $\bar\Phi$.

  \item In \cite[Theorem 4.15]{MR2797562} it is assumed that $\omega \in A_{i(\Phi)}$. Note that, according to Theorem~\ref{thm:MaximalCont}, $\omega \in A_{i(\Phi)}$ is equivalent to the requirement that $\omega \in A_\Phi$.
\end{itemize}

We can thus conclude that \eqref{eq:WModularPoincare} holds for all $v \in C_0^\infty(\Omega)$. Finally, we argue by density. This concludes the proof.
\end{proof}

\begin{remark}[equivalence]
On $W^{1,\Phi}_0(\omega,\Omega)$, $\|\GRAD w \|_{L^\Phi(\omega,\Omega)}$ defines a norm that is equivalent to $\| w \|_{W^{1,\Phi}(\omega,\Omega)}$ provided that $\Phi,\Phi^* \in \Delta_2$ and $\omega \in A_\Phi$.
\end{remark}

\subsubsection{Weighted Lebesgue and Sobolev spaces}
A specific example of the constructions described above are weighted Lebesgue and Sobolev spaces. Let $p \in (1,\infty)$, let $\Phi_p$ be as in \eqref{eq:examples}, and let $\omega \in A_p$. We set
\[
  W^{1,p}(\omega,\Omega) \coloneqq W^{1,\Phi_p}(\omega,\Omega), \qquad W^{1,p}_0(\omega,\Omega) \coloneqq W^{1,\Phi_p}_0(\omega,\Omega).
\]
On $W^{1,p}_0(\omega,\Omega)$ we have a weighted Poincar\'e inequality: if $D \subset \R^d$ is open, bounded, and Lipschitz, $p \in (1,\infty)$, and $\omega \in A_p$, then \cite[Theorem 1.3]{MR0643158}
\begin{equation}
\label{eq:wPoincare}
  \| w \|_{L^p(\omega,D)} \leq {C_p} \diam (D) \| \GRAD w \|_{L^p(\omega,D)} \quad \forall w\in W^{1,p}_0(\omega,D).
\end{equation}
The constant $C_p$ depends on $\omega$ only through $[\omega]_{A_p}$. The following inequality is also useful for our analysis \cite[Lemmas 3.1 and 4.2]{MR3439216}: There exists $w_D \in \R$ such that
\begin{equation}
\label{eq:wPoincare2}
  \| w - w_{D} \|_{L^p(\omega,D)} \leq {C_p} \diam (D) \| \GRAD w \|_{L^p(\omega,D)} \quad \forall w\in W^{1,p}(\omega,D).
\end{equation}

We also recall a scaled trace inequality. Let $T \subset \R^d$ be a simplex and $F \subset T$ a face of $T$. If $p \in [1,\infty)$ and $w \in W^{1,p}(T)$, then its trace $\tr_{\partial T} w \in L^p(\partial T)$ satisfies
\begin{equation}
  \frac1{|F|}\| \tr_{F} w \|_{L^p(F)}^p \lesssim \frac1{|T|} \| w \|_{L^p(T)}^p + \frac{\diam(T)^p}{|T|} \| \GRAD w \|_{L^p(T)}^p.
\label{eq:ScaledTrace}
\end{equation}

We conclude with a simple embedding result between weighted Orlicz spaces. For a more detailed discussion we refer to \cite[Section 4.5]{MR3024912} and \cite[Theorem 3.8]{MR1155360}.

\begin{proposition}[embedding]
\label{prop:LPhiweightIntoL2weight}
  Let $\Phi$ be an $N$-function such that $\Phi, \Phi^* \in \Delta_2$, and let $\omega \in A_{\Phi}$. If $i(\Phi) > 2$, then we have the embedding $L^\Phi(\omega,\Omega) \hookrightarrow L^2(\omega,\Omega)$. Consequently, and under the same assumptions, $W^{1,\Phi}_0(\omega,\Omega) \hookrightarrow W^{1,2}_0(\omega,\Omega)$.
\end{proposition}
\begin{proof}
  Let $\varrho = \tfrac12( i(\Phi)-2) > 0$ and set $T \geq 1$. Note that, since $2 <  i(\Phi) \leq I(\Phi) < \infty$, we have that $t^{i(\Phi) - \varrho} =\min\{ t^{i(\Phi)-\varrho}, t^{I(\Phi) + \varrho} \}$ for $t\geq T$. In addition, since $i(\Phi)-\varrho > 2$, we also have $t^{i(\Phi) - \varrho} \geq t^2$ for all $t \geq T$. These observations yield
  \begin{equation}
    \frac12 t^2 \leq \frac12\min\{ t^{i(\Phi)-\varrho}, t^{I(\Phi)+\varrho} \} \leq \frac1{2\Phi(1)c_\varrho} \Phi(t)
    \quad
    \forall t \geq T,
    \label{aux:embedding_2}
  \end{equation}
  where we have used \eqref{eq:UseofIndices}. For the rest of the proof, we set $G = (2\Phi(1)c_\varrho)^{-1}>0$. We now let $v \in L^\Phi(\omega,\Omega)$ and define
  $
    \Omega_T := \left\{ x \in \Omega : |v(x)| \geq T \right\}.
  $
  We may then estimate
  \begin{equation}
    \frac12 \int_\Omega |v(x)|^2 \omega(x) \diff x = \frac12 \int_{\Omega_T} |v(x)|^2 \omega(x) \diff x + \frac12 \int_{\Omega\setminus \Omega_T} |v(x)|^2 \omega(x) \diff x.
    \label{aux:embedding_1}
  \end{equation}
  To control the first integral on the right hand side of \eqref{aux:embedding_1}, we use the bound we have just obtained \eqref{aux:embedding_2}:
  \[
    \frac12 \int_{\Omega_T} |v(x)|^2 \omega(x) \diff x \leq G \int_\Omega \Phi(|v(x)|) \omega(x) \diff x.
  \]
  To bound the second integral on the right hand side of \eqref{aux:embedding_1}, we use that $v$ is ``small'' in $\Omega\setminus\Omega_T$, that $\omega$ is a weight, and that $\Omega$ is bounded:
  \[
    \frac12 \int_{\Omega\setminus\Omega_T} |v(x)|^2 \omega(x) \diff x \leq \frac{1}2 T^2 \omega(\Omega).
  \]
  We have therefore arrived at the following estimate
  \[
    \frac12 \int_{\Omega} |v(x)|^2 \omega(x) \diff x \lesssim \int_\Omega \Phi(|v(x)|) \omega(x) \diff x + \omega(\Omega),
  \]
  which proves the embedding.
\end{proof}

\begin{remark}[Sobolev embeddings]
In the context of the last result, it seems natural to expect that $W^{1,\Phi}_0(\omega,\Omega) \hookrightarrow L^q(\omega,\Omega)$ for some $q > i(\Phi)$. After all, when $\Phi = \Phi_p$ so that $i(\Phi) = p$, this is the content of well-known weighted Sobolev embedding results. However, to the best of our knowledge, the only reference dealing with weighted Orlicz-Sobolev embeddings is \cite{MR1190437}, where the definition of weighted Orlicz spaces is different and not equivalent to the one we adopt here.
\end{remark}

\section{The weighted $\Phi$-Laplace problem}
\label{sec:WPhiLap}

We now present the assumptions that allow us to properly formulate and analyze the problem of minimizing \eqref{eq:DefOfJ}. Throughout our discussion, $\Omega \subset \R^d$ is a Lipschitz polytope and $\omega$ is a weight; further assumptions on $\omega$ will be made later. With respect to $\Phi$, we assume the following; compare with \cite[assumption A.1]{MR2914267} and \cite[assumption 2.1]{MR2911397}.

\begin{assumption}[structural assumptions]
\label{ass:equivalence}
Let $\Phi$ be an $N$-function that satisfies the following conditions:
    $\Phi \in C^1([0,\infty)) \cap C^2(0,\infty)$ and
    \begin{equation}
    \label{eq:uniformly_convex}
      \Phi'(t) \simeq t \, \Phi''(t)
    \end{equation}
    uniformly in $t > 0$. The constants hidden in $\simeq$ are called the characteristics of $\Phi$.
\end{assumption}

For $f \in L^{\Phi^{*}}(\omega,\Omega)$, the following variational problem is a necessary and sufficient condition for a minimum of $\calJ$ over $W^{1,\Phi}_0(\omega,\Omega)$: Find $u \in W^{1,\Phi}_0(\omega,\Omega)$ such that
\begin{equation}
  \int_{\Omega} \omega(x) \mathbf{A}(\nabla u(x)) \cdot \nabla v(x) \mathrm{d}x = \int_{\Omega} \omega(x) f(x) v(x)\mathrm{d}x
  \quad
  \forall v \in W_0^{1,\Phi}(\omega,\Omega),
\label{eq:nonlinear_weighted_problem_weak}
\end{equation}
where $\mathbf{A}$ is defined in \eqref{eq:Splitxvec}. We note that, on the basis of Young’s inequality, we have
\[
  \left| \int_{\Omega} \omega(x) f(x) v(x) \mathrm{d}x \right| \leq \int_{\Omega} \omega(x) \Phi(|v(x)|)\mathrm{d}x +  \int_{\Omega} \omega(x) \Phi^{*}(|f(x)|) \mathrm{d}x < \infty,
\]
so that the right hand side of \eqref{eq:nonlinear_weighted_problem_weak} is well-defined. Assume that $\omega \in A_\Phi$. A convex minimization argument guarantees the existence and uniqueness of a weak solution to problem \eqref{eq:nonlinear_weighted_problem_weak}.

\subsection{Some further properties}
The following consequences of Assumption~\ref{ass:equivalence} are important. $\Phi$ is strictly convex in $(0,\infty)$, $\Phi'$ is strictly  monotone increasing in $(0,\infty)$, and $\Phi$ satisfies the $\Delta_2$-condition; see \cite[Remark 42]{Kreuzer} and \cite[page 487]{MR2911397}. Moreover, $\Delta_2(\Phi)$ depends only on the characteristics of $\Phi$. $\Phi^*$ also satisfies Assumption \ref{ass:equivalence}; see \cite[Lemma 25]{MR2418205}. In particular, $\Delta_2(\Phi^*)$ depends only on the characteristics of $\Phi$. Uniformly in $s,t \in \mathbb{R}$, we have the following properties \cite[Lemma 24]{MR2418205}:
\begin{equation}
\label{eq:Phi_pp_p}
\Phi''(|s| + |t|)|s-t| \simeq \Phi'_{|s|}(|s-t|),
\qquad
\Phi''(|s| + |t|)|s-t|^2 \simeq \Phi_{|s|}(|s-t|).
\end{equation}
The following shifted version of Young's inequality holds: for all $\delta>0$ there exists a positive constant $C_{\delta}$ such that \cite[Lemma 32]{MR2418205}
\begin{equation}
 \label{eq:shifted_Young_inequality}
 s \Phi_a'(t) + \Phi_a'(s)t \leq \delta \Phi_a(s) + C_{\delta} \Phi_a(t), \qquad \forall s,t,a \geq 0.
\end{equation}
\subsection{The operator $\mathbf{A}$}
\label{sec:nonlinear_operator_A}
Let $\Phi$ be an $N$-function that satisfies Assumption~\ref{ass:equivalence}. We recall the definition of the vector field $\mathbf{A}$ in \eqref{eq:Splitxvec}, namely,
\begin{equation}
\label{eq:mathbfA}
\mathbf{A}: \mathbb{R}^d \rightarrow \mathbb{R}^d,
\qquad
\mathbf{A}(\bzeta) \coloneqq \Phi'(|\bzeta|) \frac{\bzeta}{|\bzeta|} 
\quad \forall \bzeta \in \mathbb{R}^d.
\end{equation}

\begin{example}[the $p$-Laplacian]\rm
  Let $p \in (1,\infty)$, and let $\kappa \geq 0$. Recall the function $\Phi_{p,\kappa}$ introduced in \eqref{eq:examples}. Note that $\Phi_{p,\kappa}$ satisfies the conditions in Assumption \ref{ass:equivalence} and that the characteristics of $\Phi_{p,\kappa}$ do not depend on $\kappa$. In fact, we have
  \[
   \min\{1,p-1\} (\kappa + t )^{p-2} \leq \Phi_{p,\kappa}''(t) \leq \max\{1,p-1\} (\kappa + t )^{p-2}
  \]
  for every $t>0$ \cite[page 376]{MR2914267}. In this scenario
  \[
    \mathbf{A}(\bzeta) = (\kappa + |\bzeta|)^{p-2} \bzeta, \quad \bzeta \in \R^d.
  \]
\end{example}

\begin{remark}[$\Phi$-monotonicity and $\Phi$-growth]
It is shown in \cite[Lemma 21]{MR2418205} that there exist positive constants $C_0$ and $C_1$ such that
\begin{align}
\label{eq:nonlinear_operator_1}
  (\mathbf{A}(\bzeta) - \mathbf{A}(\beeta)) \cdot ( \bzeta  - \beeta ) \geq C_0 \Phi''(|\bzeta| + |\beeta|) | \bzeta  - \beeta |^2
  \quad
  \forall \bzeta, \beeta \in \mathbb{R}^d,
  \\
\label{eq:nonlinear_operator_2}
  |\mathbf{A}(\bzeta) - \mathbf{A}(\beeta)| \leq C_1 \Phi''(|\bzeta| + |\beeta|) | \bzeta  - \beeta |
  \quad
  \forall \bzeta, \beeta \in \mathbb{R}^d.
 \end{align}
Here, $C_0$ and $C_1$ depend only on $\Delta_2(\Phi)$, $\Delta_2(\Phi^{*})$, and the characteristics of $\Phi$.
\end{remark}

Let us now define the $N$-function $\Psi :[0,\infty) \to [0,\infty)$ by
\[
  \Psi(0) = 0, \qquad \qquad \Psi'(t) \coloneqq \sqrt{t \Phi'(t)} , \quad t>0.
\]
In \cite[Lemma 25]{MR2418205} it is shown that the $N$-functions $\Psi$ and $\Psi^*$ also satisfy Assumption \ref{ass:equivalence} and that $\Psi''(t) \simeq \sqrt{ \Phi''(t) }$ uniformly in $t>0$. With the function $\Psi$ at hand, we introduce the vector field $\mathbf{V}$ as follows:
\begin{equation}
\label{eq:mathbfV}
\mathbf{V} : \mathbb{R}^d \rightarrow \mathbb{R}^d,
\qquad
\mathbf{V}(\bzeta) \coloneqq \Psi'(|\bzeta|) \frac{\bzeta}{|\bzeta|} \quad \forall \bzeta \in \mathbb{R}^d.
\end{equation}
An application of \cite[Lemma 21]{MR2418205} shows that the bounds \eqref{eq:nonlinear_operator_1} and \eqref{eq:nonlinear_operator_2} hold if $\mathbf{A}$ and $\Phi$ are replaced by $\mathbf{V}$ and $\Psi$, respectively.

\begin{example}[the $p$-Laplacian]\rm
  Let $p \in (1,\infty)$, and let $\kappa \geq 0$. The following functions correspond to those defined in \eqref{eq:examples}:
  \begin{equation*}
   \label{eq:examples_2}
     \Psi_p'(t) = t^{\frac{p}{2}},
     \quad
     \mathbf{V}_p(\bzeta) = |\bzeta|^{\frac{p-2}{2}} \bzeta,
     \quad
     \Psi_{p,\kappa}'(t) = (\kappa + t)^{\frac{p-2}{2}} t ,
     \quad
     \mathbf{V}_{p,\kappa}(\bzeta) = (\kappa + |\bzeta|)^{\frac{p-2}{2}} \bzeta.
  \end{equation*}
\end{example}

The following result expresses the relationship between $\mathbf{A}$, $\mathbf{V}$, and $\{ \Phi_a \}_{a \geq 0}$.

\begin{proposition}[equivalences]
Let $\mathbf{A}$ and $\mathbf{V}$ be defined as in \eqref{eq:mathbfA} and \eqref{eq:mathbfV}, respectively. Then, for all $\bzeta, \beeta \in \mathbb{R}^d$, we have 
\begin{align}
\label{eq:fundamental_1}
  (\mathbf{A}(\bzeta) - \mathbf{A}(\beeta)) \cdot (\bzeta - \beeta) & \simeq | \mathbf{V}(\bzeta) - \mathbf{V}(\beeta) |^2
\\
\label{eq:fundamental_2}
  & \simeq \Phi_{|\bzeta|}(|\bzeta - \beeta|)
  \\
\label{eq:fundamental_2_new}
  & \simeq \Phi_{|\beeta|}(|\bzeta - \beeta|)
  \\
\label{eq:fundamental_3}
  & \simeq | \bzeta - \beeta |^2\Phi''(|\bzeta| + |\beeta|).
\end{align}
Moreover, for all $\bzeta, \beeta \in \mathbb{R}^d$, we have
\begin{equation}
 \label{eq:fundamental_4}
 |\mathbf{A}(\bzeta) - \mathbf{A}(\beeta)| \simeq \Phi'_{|\bzeta|}(|\bzeta - \beeta|).
 \end{equation}
The constants hidden in $\simeq$ only depend on the characteristics of $\Phi$.
\end{proposition}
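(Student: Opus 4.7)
The plan is to first establish \eqref{eq:fundamental_3}, then use the identification $\Phi_a(t)\simeq \Phi''(a+t)\,t^{2}$ to bridge to \eqref{eq:fundamental_2} and \eqref{eq:fundamental_2_new}, next replay the argument for the pair $(\mathbf{V},\Psi)$ to obtain \eqref{eq:fundamental_1}, and finally derive \eqref{eq:fundamental_4} by combining Cauchy--Schwarz with the shifted-derivative identity.

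First I would combine \eqref{eq:nonlinear_operator_1} and \eqref{eq:nonlinear_operator_2} with Cauchy--Schwarz: the lower bound $(\mathbf{A}(\bzeta)-\mathbf{A}(\beeta))\cdot(\bzeta-\beeta)\gtrsim \Phi''(|\bzeta|+|\beeta|)|\bzeta-\beeta|^{2}$ is \eqref{eq:nonlinear_operator_1}, while the upper bound follows from $(\mathbf{A}(\bzeta)-\mathbf{A}(\beeta))\cdot(\bzeta-\beeta)\le |\mathbf{A}(\bzeta)-\mathbf{A}(\beeta)|\,|\bzeta-\beeta|$ and \eqref{eq:nonlinear_operator_2}. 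This gives \eqref{eq:fundamental_3} immediately.

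Next, to pass to shifted $N$-functions, I would exploit the elementary fact $\Phi_a(t)\simeq \Phi''(a+t)\,t^{2}$, which is a direct consequence of the definition \eqref{eq:shifted_N_functions} and Assumption~\ref{ass:equivalence}: differentiation gives $\Phi_a'(t)=\Phi'(a+t)\,t/(a+t)\simeq \Phi''(a+t)\,t$ by the characteristic $\Phi'(s)\simeq s\Phi''(s)$, and integrating together with the monotonicity of the integrand (or equivalently the second identity in \eqref{eq:Phi_pp_p}) produces the claimed equivalence. Specializing to $(a,t)=(|\bzeta|,|\bzeta-\beeta|)$ yields $\Phi_{|\bzeta|}(|\bzeta-\beeta|)\simeq \Phi''(|\bzeta|+|\bzeta-\beeta|)|\bzeta-\beeta|^{2}$. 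To bring the arguments in line with \eqref{eq:fundamental_3}, I note that $|\bzeta|+|\bzeta-\beeta|\simeq |\bzeta|+|\beeta|$ by the triangle inequality, and the $\Delta_2$ condition (inherited from Assumption~\ref{ass:equivalence}) combined with the characteristic relation ensures $\Phi''(u)\simeq \Phi''(v)$ whenever $u\simeq v$. This proves \eqref{eq:fundamental_2}; the symmetric choice $(a,t)=(|\beeta|,|\bzeta-\beeta|)$ proves \eqref{eq:fundamental_2_new}.

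For \eqref{eq:fundamental_1} I would replay the preceding argument with $(\mathbf{V},\Psi)$ in place of $(\mathbf{A},\Phi)$, using the fact already recorded in the text that the analogues of \eqref{eq:nonlinear_operator_1}--\eqref{eq:nonlinear_operator_2} hold for this pair. The Cauchy--Schwarz step then yields $|\mathbf{V}(\bzeta)-\mathbf{V}(\beeta)|\simeq \Psi''(|\bzeta|+|\beeta|)\,|\bzeta-\beeta|$; squaring and invoking $(\Psi''(t))^{2}\simeq \Phi''(t)$ chains this with \eqref{eq:fundamental_3}. Finally, for \eqref{eq:fundamental_4}, the upper bound is just \eqref{eq:nonlinear_operator_2} rewritten via $\Phi'_{|\bzeta|}(|\bzeta-\beeta|)\simeq \Phi''(|\bzeta|+|\beeta|)\,|\bzeta-\beeta|$ (the first identity in \eqref{eq:Phi_pp_p} together with the argument-swap already justified), while the matching lower bound comes from \eqref{eq:nonlinear_operator_1} and Cauchy--Schwarz after dividing by $|\bzeta-\beeta|$ (trivial when $\bzeta=\beeta$).

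The main obstacle is purely bookkeeping rather than substantive: one must carefully justify each transition of the form $\Phi''(u)\simeq \Phi''(v)$ for $u\simeq v$ from Assumption~\ref{ass:equivalence} and the $\Delta_2$ property, and keep track of the shift swap from $|\bzeta|+|\bzeta-\beeta|$ to $|\bzeta|+|\beeta|$. Once these equivalences are in place, all five statements reduce to a short chain of applications of \eqref{eq:nonlinear_operator_1}--\eqref{eq:nonlinear_operator_2}, \eqref{eq:Phi_pp_p}, and the defining identity for $\Phi_a$.
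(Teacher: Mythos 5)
Your proposal is correct, but it takes a genuinely different route from the paper. The paper's proof of this proposition is simply a citation: the chain \eqref{eq:fundamental_1}--\eqref{eq:fundamental_3} is attributed to \cite[Lemma 3]{MR2418205} and \eqref{eq:fundamental_4} to \cite[Corollary 64]{Kreuzer}, with no derivation given. You instead reconstruct the result from the preliminaries already assembled in Section~\ref{sec:notation_and_prel} --- the $\Phi$-monotonicity/growth bounds \eqref{eq:nonlinear_operator_1}--\eqref{eq:nonlinear_operator_2}, the shift identities \eqref{eq:Phi_pp_p}, the relation $\Psi'' \simeq \sqrt{\Phi''}$, and the stability of $\Phi''$ under comparable changes of argument that follows from Assumption~\ref{ass:equivalence} and $\Delta_2$. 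Your chain is sound: the lower bound in \eqref{eq:fundamental_3} is \eqref{eq:nonlinear_operator_1} verbatim and the upper bound follows from Cauchy--Schwarz plus \eqref{eq:nonlinear_operator_2}; the bridge $\Phi_a(t) \simeq \Phi''(a+t)t^2$ for nonnegative scalars combined with $|\bzeta|+|\bzeta-\beeta| \simeq |\bzeta|+|\beeta|$ and the $\Phi''(u)\simeq\Phi''(v)$ stability gives \eqref{eq:fundamental_2}--\eqref{eq:fundamental_2_new}; replaying the Cauchy--Schwarz argument with $(\mathbf{V},\Psi)$ and using $(\Psi'')^2 \simeq \Phi''$ gives \eqref{eq:fundamental_1}; and the same two ingredients (Cauchy--Schwarz plus the first identity of \eqref{eq:Phi_pp_p} after the argument swap) deliver both directions of \eqref{eq:fundamental_4}. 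One small point worth making explicit in a write-up: the identities \eqref{eq:Phi_pp_p} are stated for \emph{scalars} $s,t$, and the triple $(|\bzeta|,|\beeta|,|\bzeta-\beeta|)$ for vectors need not be realizable as $(|s|,|t|,|s-t|)$ for scalars; your fix --- replacing $|\bzeta|+|\beeta|$ by the comparable quantity $|\bzeta|+|\bzeta-\beeta|$ before invoking the scalar identity with $a=|\bzeta|$, $r=|\bzeta-\beeta|$ --- is exactly what is needed, and you should spell it out rather than appeal to monotonicity of the integrand (which fails for $p<2$). In short, your approach buys self-containment at the cost of length, whereas the paper simply delegates to the literature; both are valid.
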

\begin{proof}
 A proof of the estimates \eqref{eq:fundamental_1}--\eqref{eq:fundamental_3} can be found in \cite[Lemma 3]{MR2418205} while a proof for \eqref{eq:fundamental_4} can be found in \cite[Corollary 64]{Kreuzer}.
\end{proof}

\section{Finite element discretization of the $\Phi$-Laplace problem}
\label{sec:FEM}
Now that we have established the well-posedness of \eqref{eq:nonlinear_weighted_problem_weak}, we proceed with its approximation. Since we have assumed that $\Omega$ is a polytope, it can be meshed exactly. Denote by $\T = \{ T \}$ a conforming partition, or mesh, of $\bar \Omega$ into closed simplices $T$ of size $h_T = \mathrm{diam}(T)$. Here, $h = \max\{ h_T: T \in \T \}$. For $T \in \T$, we define $S_T$ as the set of all elements in $\T$ that share at least one vertex with $T$. We denote by $\Tr = \{ \T \}_{h>0}$ a collection of conforming meshes that are refinements of an initial mesh $\mathscr{T}_0$. We assume that $\Tr$ satisfies the so-called shape regularity condition \cite[Definition 1.107]{MR2050138}: there exits a constant $\sigma > 1$ such that
\[
  \sup_{\T \in \Tr} \max \{ \sigma_T: T \in \T\} \leq \sigma,
\]
where $\sigma_T \coloneqq h_T / \rho_T$ is the shape coefficient of $T$ and $\rho_T$ is the diameter of the sphere inscribed in $T$. With this setting, given $\T \in \Tr$, we define the finite element spaces
\[
  \mathbb{W}_h \coloneqq \{ v_h \in W^{1,\infty}(\Omega) : \ v_{h|T} \in \mathbb{P}_1(T) \,\, \forall T \in \T \},
  \qquad
  \mathbb{V}_h \coloneqq \mathbb{W}_h \cap W^{1,\infty}_0(\Omega) .
\]

The finite element approximation of \eqref{eq:nonlinear_weighted_problem_weak} is:
Find $u_h \in \mathbb{V}_h$ such that
\begin{equation}
\label{eq:nonlinear_weighted_problem_discrete}
\int_{\Omega} \omega(x) \mathbf{A}(\nabla u_h(x)) \cdot \nabla v_h(x) \mathrm{d}x = \int_{\Omega} \omega(x) f(x) v_h(x) \mathrm{d}x
\quad
\forall v_h \in \mathbb{V}_h.
\end{equation}
A convex minimization argument yields the existence and uniqueness of $u_h$.

\subsection{A best approximation result}

We present a best approximation result for \eqref{eq:nonlinear_weighted_problem_discrete}. To derive it, we first establish the following Galerkin orthogonality property:
\begin{equation}
 \label{eq:Galerkin_ortogonality}
 \int_{\Omega} \omega(x) \left( \mathbf{A}(\nabla u(x)) - \mathbf{A}(\nabla u_h(x)) \right) \cdot \nabla v_h(x) \mathrm{d}x = 0 \quad \forall v_h \in \mathbb{V}_h.
\end{equation}
This is the first step towards proving \emph{best approximation} for finite element solutions.

\begin{theorem}[best approximation]
\label{thm:CeaLin}
Let $u \in W_0^{1,\Phi}(\omega,\Omega)$ solve  \eqref{eq:nonlinear_weighted_problem_weak}, and let $u_h \in \mathbb{V}_h$ be its finite element approximation defined as the solution to \eqref{eq:nonlinear_weighted_problem_discrete}. Then,
\begin{equation}
\| \mathbf{V}(\nabla u) - \mathbf{V}(\nabla u_h) \|_{L^2(\omega,\Omega)}
\lesssim \inf_{v_h \in \mathbb{V}_h}
\| \mathbf{V}(\nabla u) - \mathbf{V}(\nabla v_h) \|_{L^2(\omega,\Omega)}.
\label{eq:best_approximation}
\end{equation}
\end{theorem}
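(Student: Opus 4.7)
The plan is to carry out the standard quasi-norm Céa argument of Diening--Ettwein (Lemma 21/Lemma 32 type estimates), adapted to the weighted setting. All the pointwise algebraic ingredients carry over verbatim, since introducing the $A_1$ weight $\omega(x)\diff x$ only changes the reference measure in the integrals. First I would start from the pointwise equivalence \eqref{eq:fundamental_1} and integrate against $\omega$ to obtain
\[
\|\mathbf{V}(\nabla u) - \mathbf{V}(\nabla u_h)\|_{L^{2}(\omega,\Omega)}^{2}
\simeq \int_{\Omega}\omega(x)\bigl(\mathbf{A}(\nabla u)-\mathbf{A}(\nabla u_h)\bigr)\cdot\bigl(\nabla u - \nabla u_h\bigr)\diff x.
\]

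Next, for an arbitrary $v_h \in \mathbb{V}_h$, I would split $\nabla u - \nabla u_h = (\nabla u - \nabla v_h) + (\nabla v_h - \nabla u_h)$ and invoke Galerkin orthogonality \eqref{eq:Galerkin_ortogonality} with the test function $v_h - u_h \in \mathbb{V}_h$ to kill the second piece. What remains to bound is
\[
\mathcal{I}\coloneqq \int_{\Omega}\omega(x)\bigl(\mathbf{A}(\nabla u)-\mathbf{A}(\nabla u_h)\bigr)\cdot\bigl(\nabla u - \nabla v_h\bigr)\diff x.
\]
Here I would apply the pointwise estimate \eqref{eq:fundamental_4} to get $|\mathbf{A}(\nabla u)-\mathbf{A}(\nabla u_h)| \lesssim \varphi'_{|\nabla u|}(|\nabla u - \nabla u_h|)$, so that the integrand is controlled by $\omega\, \varphi'_{|\nabla u|}(|\nabla u - \nabla u_h|)\,|\nabla u - \nabla v_h|$.

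At this point, the key tool is the shifted Young inequality \eqref{eq:shifted_Young_inequality} applied pointwise with $a=|\nabla u(x)|$, $s=|\nabla u(x)-\nabla u_h(x)|$, and $t=|\nabla u(x)-\nabla v_h(x)|$; this yields, for any $\delta>0$ and some $C_\delta>0$,
\[
\varphi'_{|\nabla u|}(|\nabla u - \nabla u_h|)\,|\nabla u - \nabla v_h|
\leq \delta\,\varphi_{|\nabla u|}(|\nabla u - \nabla u_h|) + C_\delta\,\varphi_{|\nabla u|}(|\nabla u - \nabla v_h|).
\]
Multiplying by $\omega$ and integrating, then using \eqref{eq:fundamental_2} to translate each $\varphi_{|\nabla u|}(\cdot)$ back into a quasi-norm $|\mathbf{V}(\nabla u)-\mathbf{V}(\cdot)|^{2}$, I would obtain
\[
\|\mathbf{V}(\nabla u) - \mathbf{V}(\nabla u_h)\|_{L^{2}(\omega,\Omega)}^{2}
\lesssim \delta\,\|\mathbf{V}(\nabla u) - \mathbf{V}(\nabla u_h)\|_{L^{2}(\omega,\Omega)}^{2}
+ C_\delta\,\|\mathbf{V}(\nabla u) - \mathbf{V}(\nabla v_h)\|_{L^{2}(\omega,\Omega)}^{2}.
\]

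Finally, by choosing $\delta$ sufficiently small I would absorb the first term on the right into the left hand side, take square roots, and then take the infimum over $v_h \in \mathbb{V}_h$ to arrive at \eqref{eq:best_approximation}. The only subtlety I foresee is checking that all hidden constants are independent of $h$ and of the weight beyond $[\omega]_{A_1}$; this is automatic because every inequality used is pointwise in $x$ and the dependence on the characteristics of $\varphi$ enters only through $\Delta_2(\varphi)$, $\Delta_2(\varphi^*)$, and the uniform bounds on $\Delta_2(\varphi_a)$, $\Delta_2(\varphi_a^*)$ already recorded after \eqref{eq:shifted_N_functions}. No weighted interpolation or embedding is invoked, so the argument does not require any further properties of $\omega$ at this stage.
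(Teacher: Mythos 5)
Your argument is correct and follows essentially the same route as the paper: Galerkin orthogonality after splitting $\nabla u-\nabla u_h$, the pointwise bound $|\mathbf{A}(\nabla u)-\mathbf{A}(\nabla u_h)|\lesssim \varphi'_{|\nabla u|}(|\nabla u-\nabla u_h|)$ (the paper gets this via \eqref{eq:fundamental_3} and \eqref{eq:Phi_pp_p} rather than \eqref{eq:fundamental_4}, which is the same content), the shifted Young inequality with shift $a=|\nabla u(x)|$, conversion back to the $\mathbf{V}$ quasi-norm via \eqref{eq:fundamental_2}, and absorption of the $\delta$-term. No gaps.
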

\begin{proof}
The derivation of the best approximation property \eqref{eq:best_approximation} modifies the arguments in \cite[Lemma 5.2]{MR2317830} to account for our weighted setting. To proceed, we define $e = u-u_h$. Moreover, for an arbitrary but fixed $v_h \in \mathbb{V}_h$, we set $z = u - v_h$. We begin by using the equivalence \eqref{eq:fundamental_1} and Galerkin orthogonality to obtain
\begin{align*}
 \mathfrak{I} &\coloneqq \int_{\Omega} \omega(x) | \mathbf{V}(\nabla u(x)) - \mathbf{V}(\nabla u_h(x)) |^2 \mathrm{d}x \\
   &\simeq \int_{\Omega} \omega(x) ( \mathbf{A}(\nabla u(x)) - \mathbf{A}(\nabla u_h(x))) \cdot \nabla e(x) \mathrm{d}x \\
   &=  \int_{\Omega} \omega(x) ( \mathbf{A}(\nabla u(x)) - \mathbf{A}(\nabla u_h(x))) \cdot \nabla z(x) \mathrm{d}x.
\end{align*}
We now use the equivalences \eqref{eq:fundamental_3} and \eqref{eq:Phi_pp_p} to derive
\begin{align*}
 \mathfrak{I}
 & \simeq \int_{\Omega} \omega(x) \Phi''(|\nabla u(x)| + |\nabla u_h(x)|)|\nabla e(x)||\nabla z(x)|\mathrm{d}x
 \\
 & \simeq \int_{\Omega} \omega(x) \Phi'_{|\nabla u(x)|}(|\nabla e(x)|)|\nabla z(x)|\mathrm{d}x.
\end{align*}
An application of the shifted version of Young's inequality \eqref{eq:shifted_Young_inequality} shows that for all $\delta >0$ there exists a positive constant $C_{\delta}$ so that
\begin{align*}
  \mathfrak{I} & \leq
  C \delta \int_{\Omega} \omega(x) \Phi_{|\nabla u(x)|}(|\nabla e(x)|)\mathrm{d}x
  +
  C_{\delta} \int_{\Omega} \omega(x) \Phi_{|\nabla u(x)|}(|\nabla z(x)|)\mathrm{d}x
  \\
  & \leq
  C \delta \int_{\Omega} \omega(x) | \mathbf{V}(\nabla u(x)) - \mathbf{V}(\nabla u_h(x)) |^2 \mathrm{d}x
  +
  C_{\delta} \int_{\Omega} \omega(x) \Phi_{|\nabla u(x)|}(|\nabla z(x)|)\mathrm{d}x,
\end{align*}
where to obtain the last inequality we used the equivalence \eqref{eq:fundamental_2}. Using the definition of $\mathfrak{I}$ and the equivalence \eqref{eq:fundamental_2} once again we obtain
\[
  \mathfrak{I}
  \leq
  C \delta \mathfrak{I} + C C_{\delta} \| \mathbf{V}(\nabla u) - \mathbf{V}(\nabla v_h) \|_{L^2(\omega,\Omega)}^2.
\]
The term $C\delta \mathfrak{I}$ appearing in the previous inequality can be absorbed on the left hand side if $\delta$ is chosen carefully. This gives us the estimate \eqref{eq:best_approximation} and concludes the proof.
\end{proof}

\subsection{Stability estimates for an interpolation operator}
In this section, we derive stability estimates in weighted Orlicz--Sobolev spaces for a suitable interpolation operator $\Pi_h$. As in \cite{MR2317830}, we assume that such an interpolation operator
\[
 \Pi_h: W^{1,1}(\Omega) \rightarrow \mathbb{W}_h
\]
satisfies the following two properties:
\begin{enumerate}
 \item \emph{Projection:} $\Pi_h$ is a projection, i.e., $\Pi_h w_h = w_h$ for all $w_h \in \mathbb{W}_h$.
 \item \emph{Stability:} Let $v \in W^{1,1}(\Omega)$. Then, for each $T \in \T$ we have the bound
 \begin{equation}
 \label{eq:stability_SZ}
  \| \GRAD \Pi_h v \|_{L^{1}(T)} \lesssim \| \GRAD v \|_{L^{1}(S_T)}.
 \end{equation}
 Recall that $S_T$ is the set of elements in $\T$ that share at least one vertex with $T$.
\end{enumerate}

An example of an interpolation operator $\Pi_h$ that satisfies these two properties is the so-called Scott--Zhang operator, which was introduced in \cite{MR1011446}. The construction of $\Pi_h$ is such that it preserves homogeneous boundary conditions, i.e., $v = 0$ on $\partial \Omega$ implies $\Pi_h v = 0$ on $\partial \Omega$ \cite[(2.17)]{MR1011446}. Moreover, $\Pi_h$ is a projection from $W^{1,1}(\Omega)$ to $\mathbb{W}_h$ with the property that $W_0^{1,1}(\Omega)$ is mapped to $\mathbb{V}_h$ \cite[Theorem 2.1]{MR1011446}. The Scott--Zhang operator also satisfies the stability bound \eqref{eq:stability_SZ}. We refer the reader to \cite[Theorem 3.1]{MR1011446} for a proof of this result and mention that in this case the hidden constant in \eqref{eq:stability_SZ} depends only on the shape-regularity coefficient $\sigma$.

Inspired by the unweighted case discussed in \cite[Theorem 4.5]{MR2317830}, we present a stability result in weighted Orlicz--Sobolev spaces $W^{1,\Phi}(\omega,\Omega)$.

\begin{theorem}[weighted stability]
\label{thm:WStabSZ}
Let $\Phi$ be an $N$-function that satisfies Assumption \ref{ass:equivalence}, let $\omega \in A_\Phi$, and let $v \in W^{1,\Phi}(\omega,\Omega)$. Then, for $T \in \T$, we have
\begin{equation}
 \fint_{T} \omega(x) \Phi( | \nabla \Pi_h v(x)| ) \mathrm{d}x \lesssim \fint_{S_T} \omega(x) \Phi( |\nabla v(x)| ) \mathrm{d}x,
\end{equation}
where the hidden constant depends on $\sigma$, $[\omega]_{A_{i(\Phi)}}$, and the characteristics of $\Phi$.
\label{thm:stability_SZ}
\end{theorem}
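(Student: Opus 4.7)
The plan is to reduce everything to a bound on the constant $c_T \coloneqq |\nabla\Pi_h v|_{|T}$ and then run the $L^1$-stability of $\Pi_h$ through the $A_1$-condition and Jensen's inequality, with $\Delta_2$ absorbing any multiplicative constants. Since $\Pi_h v_{|T} \in \mathbb{P}_1(T)$, the gradient is constant on $T$, hence
\[
  \fint_T \omega(x)\, \Phi(|\nabla\Pi_h v(x)|)\,\mathrm{d}x = \Phi(c_T)\, \frac{\omega(T)}{|T|},
\]
so it suffices to estimate $\Phi(c_T)$ appropriately. The stability bound \eqref{eq:stability_SZ} together with shape regularity ($|T|\simeq |S_T|$) immediately gives
\[
  c_T = \fint_T |\nabla \Pi_h v|\,\mathrm{d}x \lesssim \frac{1}{|T|}\|\nabla v\|_{L^1(S_T)} \lesssim \fint_{S_T}|\nabla v(x)|\,\mathrm{d}x.
\]

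Next, I would promote this unweighted average to an $\omega$-weighted one via the $A_1$-condition \eqref{eq:A_1_condition}. By shape regularity there is a ball $B\supset S_T$ with $|B|\simeq |S_T|$; the $A_1$-condition on $B$ yields $\omega(x)^{-1}\leq [\omega]_{A_1}|B|/\omega(B)$ for a.e. $x\in B$, and since $\omega(S_T)\leq \omega(B)$,
\[
  \int_{S_T}|\nabla v|\,\mathrm{d}x = \int_{S_T}\omega^{-1}\bigl(\omega|\nabla v|\bigr)\,\mathrm{d}x \lesssim \frac{|S_T|}{\omega(S_T)}\int_{S_T}\omega(x)|\nabla v(x)|\,\mathrm{d}x,
\]
with implicit constant depending only on $\sigma$ and $[\omega]_{A_1}$. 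Combining with the previous display and invoking the monotonicity of $\Phi$ together with $\Delta_2(\Phi)$ to absorb the resulting constant, I can then apply Jensen's inequality with respect to the probability measure $\omega(x)\,\mathrm{d}x/\omega(S_T)$ on $S_T$ to obtain
\[
  \Phi(c_T) \lesssim \Phi\!\left(\frac{1}{\omega(S_T)}\int_{S_T}\omega|\nabla v|\,\mathrm{d}x\right) \leq \frac{1}{\omega(S_T)}\int_{S_T}\omega(x)\,\Phi(|\nabla v(x)|)\,\mathrm{d}x.
\]

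To conclude, I multiply by $\omega(T)/|T|$ and use $\omega(T)\leq \omega(S_T)$ (since $T\subset S_T$) together with $|T|\simeq |S_T|$:
\[
  \fint_T \omega\, \Phi(|\nabla\Pi_h v|)\,\mathrm{d}x = \Phi(c_T)\,\frac{\omega(T)}{|T|} \lesssim \frac{\omega(T)}{|T|\,\omega(S_T)}\int_{S_T}\omega\,\Phi(|\nabla v|)\,\mathrm{d}x \lesssim \fint_{S_T}\omega\,\Phi(|\nabla v|)\,\mathrm{d}x,
\]
which is the desired bound. The main conceptual step is the second one: combining the $A_1$-condition with Jensen's inequality is exactly what lets us trade an unweighted Lebesgue average (the natural output of the Scott--Zhang stability) for a weighted modular average, and the $\Delta_2$-condition is what makes this trade constant-free up to an $[\omega]_{A_1}$-, $\sigma$-, and $\Delta_2(\Phi)$-dependent factor. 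The rest of the argument is bookkeeping that exploits the piecewise-linear structure of $\Pi_h v$ and shape regularity.
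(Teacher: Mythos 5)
Your proof is correct and uses essentially the same ingredients and structure as the paper's: the pointwise bound $|\nabla \Pi_h v|_{|T} \lesssim \fint_{S_T}|\nabla v|$ from the $L^1$-stability of $\Pi_h$, absorption of fixed constants via the $\Delta_2$-condition, Jensen's inequality, the $A_1$-condition, and shape regularity with $|T|\simeq|S_T|$. The only (harmless) difference is the order of operations: you use $A_1$ to turn the unweighted mean of $|\nabla v|$ into a weighted mean and then apply Jensen with respect to the probability measure $\omega\,\mathrm{d}x/\omega(S_T)$, whereas the paper applies Jensen to the unweighted mean first and then inserts the weight through $\sup_{S_T}\omega^{-1}$ and $\fint_T\omega$; both yield the same dependence on $\sigma$, $\Delta_2(\Phi)$, and $[\omega]_{A_1}$.
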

\begin{proof}
Let $T \in \T$ and let $x \in T$. We begin the proof by noting that
\[
 | \nabla \Pi_h v(x)| \lesssim \fint_{T} | \nabla \Pi_h v(z)| \mathrm{d}z \leq C \fint_{S_T} | \nabla v(z)| \mathrm{d}z.
\]
The first bound follows from $(\nabla \Pi_h v)_{|T} \in \mathbb{P}_0(T)$, while the second follows from the stability of the interpolation operator $\Pi_h$ in \eqref{eq:stability_SZ}. Note that $C$ depends only on the shape-regularity coefficient $\sigma$. We can therefore rely on the monotonicity of $\Phi$ and the $\Delta_2$-condition, which holds because $\Phi$ satisfies Assumption \ref{ass:equivalence}, to obtain
\begin{align*}
  \fint_{T} \omega(x) \Phi( | \nabla \Pi_h v(x)| ) \mathrm{d}x 
  & \leq
  \fint_{T} \omega(x) \Phi \left( C \fint_{S_T}  | \nabla v(z)| \mathrm{d}z \right) \mathrm{d}x \\
  & \lesssim
  \fint_T \omega(x) \Phi \left( \fint_{S_T} | \nabla v(z)| \mathrm{d}z \right) \mathrm{d}x \\
  &=
    \fint_T \omega(x) \Phi \left( \fint_{S_T} | \nabla v(z) \chi_{S_T}(z)| \diff z \right) \diff x
   \\
  &\lesssim
    \fint_T \omega(x) \Phi \left( M[ \nabla v \chi_{S_T}] (x)\right) \diff x.
\end{align*}
In the last step, we used shape regularity to replace the average on the patch $S_T$ by the average over a ball $B$, which is such that $x \in B$, $S_T \subset B$, and $|S_T| \approx |B|$. The average over the ball $B$ is therefore bounded by the maximal function.
Since $\omega \in A_\Phi$, we then invoke Theorem~\ref{thm:MaximalCont} to obtain
\begin{align*}
  \fint_{T} \omega(x) \Phi( | \nabla \Pi_h v(x)| ) \mathrm{d}x &\lesssim
  \frac1{|T|} \int_{\R^d} \omega(x) \Phi \left( M[ \nabla v \chi_{S_T}](x) \right) \diff x \\
  &\lesssim
  \frac1{|T|} \int_{\R^d} \omega(x) \Phi \left( |\nabla v(x) \chi_{S_T}(x)| \right) \diff x \\
  &\lesssim \frac1{|S_T|} \int_{S_T} \omega(x) \Phi \left( |\nabla v  (x)| \right) \diff x
  = \fint_{S_T} \omega(x) \Phi \left( |\nabla v  (x)| \right) \diff x.
\end{align*}
Note that in the course of our derivations all constants depend only on $\sigma$, $[\omega]_{A_{i(\Phi)}}$, and the characteristics of $\Phi$. This concludes the proof.
\end{proof}

We present the result of Theorem \ref{thm:stability_SZ} in the context of a family of shifted functions.

\begin{corollary}[weighted stability]
\label{cor:WStabSZ}
Let $\Phi$ be an $N$-function satisfying Assumption \ref{ass:equivalence}, and let $\{ \Phi_a \}_{a \geq 0}$ be the family of shifted $N$-functions defined in \eqref{eq:shifted_N_functions} associated with $\Phi$. If $\omega \in A_{\Phi}$, then we have the following bound uniformly in $a \geq 0$ and $T \in \T$:
\begin{equation}
 \fint_{T} \omega(x) \Phi_a ( | \nabla \Pi_h v(x)| ) \mathrm{d}x \lesssim \fint_{S_T} \omega(x) \Phi_a( |\nabla v(x)| ) \mathrm{d}x,
\end{equation}
where the hidden constant depends on $\sigma$, $[\omega]_{A_{i(\Phi)}}$, and the characteristics of $\Phi$.
\label{co:stability_SZ}
\end{corollary}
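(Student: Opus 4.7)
My plan is to observe that the proof of Theorem~\ref{thm:WStabSZ} is purely ``modular'' in $\Phi$: it uses only that $\Phi$ is a monotone, convex $N$-function satisfying the $\Delta_2$-condition. Each of these properties holds for every member of the shifted family $\{\Phi_a\}_{a\geq 0}$, and moreover the $\Delta_2$-constant $\Delta_2(\Phi_a)$ is bounded uniformly in $a\geq 0$ by a constant depending only on $\Delta_2(\Phi)$ and $\Delta_2(\Phi^{*})$, as recalled after \eqref{eq:shifted_N_functions}. Thus the corollary should follow by simply rerunning the proof of Theorem~\ref{thm:WStabSZ} with $\Phi_a$ in place of $\Phi$ and checking that no constant depends on $a$.

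Concretely, I would fix $a\geq 0$ and $T\in\T$, and begin from the pointwise bound
\[
  | \nabla \Pi_h v(x)| \lesssim \fint_{T} | \nabla \Pi_h v(z)| \mathrm{d}z \lesssim \fint_{S_T} | \nabla v(z)| \mathrm{d}z,
  \qquad x \in T,
\]
which uses only that $\nabla \Pi_h v$ is constant on $T$ together with the $L^1$-stability \eqref{eq:stability_SZ}; both are independent of $\Phi_a$. Applying the monotonicity of $\Phi_a$ and the uniform $\Delta_2$-condition to absorb the multiplicative constant yields
\[
  \fint_{T} \omega(x)\Phi_a(|\nabla \Pi_h v(x)|)\mathrm{d}x
  \lesssim
  \fint_{T} \omega(x) \Phi_a\!\left( \fint_{S_T} |\nabla v(z)| \mathrm{d}z \right) \mathrm{d}x,
\]
with implicit constant depending only on $\sigma$ and $\Delta_2(\Phi)$ (not on $a$, thanks to the uniform bound on $\Delta_2(\Phi_a)$).

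Next, since $\Phi_a$ is convex for every $a\geq 0$, Jensen's inequality moves $\Phi_a$ inside the average in $z$, giving
\[
  \fint_{T} \omega(x)\Phi_a(|\nabla \Pi_h v(x)|)\mathrm{d}x
  \lesssim
  \left( \fint_T \omega(x)\mathrm{d}x \right)\left( \sup_{z\in S_T} \frac1{\omega(z)} \right) \fint_{S_T} \omega(z) \Phi_a(|\nabla v(z)|)\mathrm{d}z.
\]
Finally, using $T\subset S_T$ with $|T|\simeq |S_T|$ (a consequence of shape regularity) and the $A_1$-condition \eqref{eq:A_1_condition} applied on a ball containing $S_T$ and comparable to $T$, the product of the two parenthesized factors is bounded by $[\omega]_{A_1}$, yielding the stated estimate.

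I do not foresee a genuine obstacle here: the only thing to watch is that every inequality invoked from the proof of Theorem~\ref{thm:WStabSZ} has a constant that either does not involve $\Phi$ at all, or involves it only through $\Delta_2(\Phi)$, for which a uniform-in-$a$ surrogate is available for the shifted family. The result therefore holds with a hidden constant depending only on $\sigma$, $\Delta_2(\Phi)$, and $[\omega]_{A_1}$, as claimed.
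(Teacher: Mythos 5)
Your proposal is correct and follows the same approach as the paper: the paper's proof simply invokes Theorem~\ref{thm:stability_SZ} applied to each $\Phi_a$ together with the uniform-in-$a$ $\Delta_2$-bound for the shifted family, and your write-up just spells out the same steps in more detail. Your observation that the argument only needs monotonicity, convexity, and a uniform $\Delta_2$-constant (rather than the full Assumption~\ref{ass:equivalence}) is a slightly more careful reading, but it is the same route.
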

\begin{proof}
The desired bound results from the application of the stability bound of Theorem \ref{thm:stability_SZ} to the family of shifted $N$-functions $\{ \Phi_a \}_{a \geq 0}$ in conjunction with the fact that such a family satisfies the $\Delta_2$-condition uniformly in $a \geq 0$ with a constant that depends only on $\Delta_2(\Phi)$; see \cite[Lemma 23]{MR2418205} and \cite[Lemma 6.1]{MR2317830}.
\end{proof}

\subsection{A quasi-best approximation result for the Scott-Zhang operator}

In this section, we prove that the Scott--Zhang interpolation operator $\Pi_h$ satisfies the following local quasi-best approximation property. In doing so, we adapt the proof of \cite[Theorem 5.7]{MR2317830} to our weighted setting.

\begin{theorem}[local approximation]
Let $\Phi$ be an $N$-function satisfying Assumption \ref{ass:equivalence}, let $\omega \in A_{\Phi}$, and let $v \in W^{1,\Phi}(\omega,\Omega)$. Then, for each $T \in \T$, we have
\begin{equation}
 \fint_{T} \omega(x) |\mathbf{V}( \nabla v(x) ) - \mathbf{V}( \nabla \Pi_h v(x) )|^2 \mathrm{d}x
 \lesssim
 \inf_{\mathbf{q} \in \mathbb{R}^d} \fint_{S_T} \omega(x) |\mathbf{V}( \nabla v(x) ) - \mathbf{V}( \mathbf{q} )|^2 \mathrm{d}x,
 \label{eq:local_approximation}
\end{equation}
where the hidden constant depends on $\sigma$, $[\omega]_{A_{i(\Phi)}}$, and the characteristics of $\Phi$. In particular, if $\nabla \mathbf{V}( \nabla v)$ belongs to $L^2(\omega,\Omega)$, then we have the bound
\begin{equation}
 \fint_{T} \omega(x) |\mathbf{V}( \nabla v(x) ) - \mathbf{V}( \nabla \Pi_h v(x) )|^2 \mathrm{d}x
 \lesssim
 h_T^2 \fint_{S_T} \omega(x) |\nabla \mathbf{V}(  \nabla v )(x)|^2 \mathrm{d}x,
 \label{eq:local_approximation_corollary}
\end{equation}
where the hidden constant depends on $\sigma$, $[\omega]_{A_{i(\Phi)}}$, and the characteristics of $\Phi$.
\label{thm:local_best_SZ}
\end{theorem}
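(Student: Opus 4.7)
The plan is to adapt the argument of \cite[Theorem 5.7]{MR2317830} to the weighted setting, threading the weight $\omega$ through each step by combining the $\mathbf{V}$-difference equivalences \eqref{eq:fundamental_1}--\eqref{eq:fundamental_2_new} with the weighted stability of $\Pi_h$ for shifted $N$-functions (Corollary~\ref{cor:WStabSZ}). The core idea is to exploit the fact that $\Pi_h$ reproduces affine functions, so that subtracting an affine map from $v$ lets us introduce a shift $|\mathbf{q}|$ which is constant in $x$ and hence plays nicely with the shifted stability bound.

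First I would fix an arbitrary $\mathbf{q}\in\mathbb{R}^d$ and set $p(x) \coloneqq \mathbf{q}\cdot x$, so that $p \in \mathbb{P}_1(T)\subset \mathbb{W}_h$ and the projection property yields $\Pi_h p = p$, hence $\nabla\Pi_h p \equiv \mathbf{q}$ on $T$. Using the elementary bound $|a+b|^2\leq 2|a|^2+2|b|^2$ pointwise, together with shape regularity $|T|\simeq |S_T|$ to extend the first resulting integral from $T$ to $S_T$, the estimate reduces to controlling $\fint_T \omega\,|\mathbf{V}(\mathbf{q})-\mathbf{V}(\nabla\Pi_h v)|^2\,\mathrm{d}x$. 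Here the crucial move is to apply \eqref{eq:fundamental_2_new} with $\bzeta = \nabla\Pi_h v$ and $\beeta=\mathbf{q}$, placing the shift on the constant $|\mathbf{q}|$:
\[
  |\mathbf{V}(\mathbf{q})-\mathbf{V}(\nabla\Pi_h v)|^2 \simeq \Phi_{|\mathbf{q}|}\bigl(|\nabla\Pi_h v-\mathbf{q}|\bigr) = \Phi_{|\mathbf{q}|}\bigl(|\nabla\Pi_h(v-p)|\bigr),
\]
where the last equality uses the linearity of $\Pi_h$. Applying Corollary~\ref{cor:WStabSZ} to the function $v-p$ with shifted $N$-function $\Phi_{|\mathbf{q}|}$, and then inverting \eqref{eq:fundamental_2_new} on the right-hand side, yields
\[
  \fint_T \omega\,\Phi_{|\mathbf{q}|}(|\nabla\Pi_h(v-p)|)\,\mathrm{d}x \lesssim \fint_{S_T} \omega\,\Phi_{|\mathbf{q}|}(|\nabla v-\mathbf{q}|)\,\mathrm{d}x \simeq \fint_{S_T} \omega\,|\mathbf{V}(\nabla v)-\mathbf{V}(\mathbf{q})|^2\,\mathrm{d}x.
\]
Taking the infimum over $\mathbf{q}\in\mathbb{R}^d$ delivers \eqref{eq:local_approximation}.

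To derive \eqref{eq:local_approximation_corollary}, I would pick $\mathbf{q}$ via a weighted Poincar\'e argument. Because $\Psi':[0,\infty)\to[0,\infty)$ is continuous, strictly increasing, and surjective, the map $\mathbf{V}:\mathbb{R}^d\to\mathbb{R}^d$ is a bijection, so there is a unique $\mathbf{q}$ for which $\mathbf{V}(\mathbf{q})$ equals the constant vector supplied by \eqref{eq:wPoincare2} applied componentwise to $\mathbf{V}(\nabla v)\in W^{1,2}(\omega,S_T)$ on the Lipschitz patch $S_T$; here $A_1\subset A_2$ permits the choice $p=2$. The inequality \eqref{eq:wPoincare2} then gives
\[
  \fint_{S_T} \omega\,|\mathbf{V}(\nabla v)-\mathbf{V}(\mathbf{q})|^2\,\mathrm{d}x \lesssim \diam(S_T)^2 \fint_{S_T}\omega\,|\nabla\mathbf{V}(\nabla v)|^2\,\mathrm{d}x \simeq h_T^2 \fint_{S_T}\omega\,|\nabla\mathbf{V}(\nabla v)|^2\,\mathrm{d}x
\]
by shape regularity, and inserting this in \eqref{eq:local_approximation} yields \eqref{eq:local_approximation_corollary}.

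The main obstacle, and the only point that requires real care, is lining up the shift used in the equivalences with the shift used in Corollary~\ref{cor:WStabSZ}: the stability bound needs a shift that is constant in $x$, and this is exactly what \eqref{eq:fundamental_2_new} buys us by placing $\mathbf{q}$ in the $\beeta$-slot (rather than using only \eqref{eq:fundamental_2}, which would pin the shift to $|\nabla v(x)|$). Once the alignment is in place, the quantitative dependence on $\sigma$, $\Delta_2(\Phi)$, and $[\omega]_{A_1}$ is inherited from Corollary~\ref{cor:WStabSZ} and from the structural constants of \eqref{eq:fundamental_1}--\eqref{eq:fundamental_2_new}.
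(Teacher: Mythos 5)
Your proposal is correct and follows essentially the same route as the paper: split with the affine comparison function $p(x)=\mathbf{q}\cdot x$, use the shift $|\mathbf{q}|$ in the $\Phi_a$-equivalences so that Corollary~\ref{cor:WStabSZ} applies with a spatially constant shift parameter, and then pass to the weighted Poincar\'e inequality \eqref{eq:wPoincare2} together with surjectivity of $\mathbf{V}$ for the rate. The one small remark is that your framing of why \eqref{eq:fundamental_2_new} is needed is slightly off: \eqref{eq:fundamental_2} is symmetric under swapping $\bzeta \leftrightarrow \beeta$ on the left-hand side, so applying it with $\bzeta=\mathbf{q}$ (as the paper does) also places the shift on $|\mathbf{q}|$; either choice works equally well.
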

\begin{proof}
We begin by noting that the equivalence \eqref{eq:fundamental_2_new} allows us to obtain
\[
 \int_{\Omega} \omega(x) |\mathbf{V}(  \nabla v(x) )|^2 \mathrm{d}x \leq C \int_{\Omega} \omega(x) \Phi(|\nabla v(x)|) \mathrm{d}x < \infty
\]
because $v \in W^{1,\Phi}(\omega,\Omega)$. This implies that $\mathbf{V}( \nabla v ) \in L^2(\omega,\Omega)$.

We now let $\mathbf{q} \in \mathbb{R}^d$ be arbitrary and apply the triangle inequality to obtain
\begin{multline*}
 \fint_{T} \omega(x) |\mathbf{V}(  \nabla v(x) ) - \mathbf{V}( \nabla \Pi_h v(x) )|^2 \mathrm{d}x
 \lesssim
 \fint_{T} \omega(x) |\mathbf{V}(  \nabla v(x) ) - \mathbf{V}( \mathbf{q} )|^2 \mathrm{d}x
 \\
 +
 \fint_{T} \omega(x) |\mathbf{V}( \mathbf{q} ) - \mathbf{V}( \nabla \Pi_h v(x) )|^2 \mathrm{d}x \eqqcolon \mathbf{I}_T + \mathbf{II}_T.
\end{multline*}
Let $\mathfrak{p} \in \mathbb{P}_1(S_T)$ be such that $\nabla \mathfrak{p} = \mathbf{q}$. Note that $\mathbf{q} = \nabla \mathfrak{p} = \nabla \Pi_h \mathfrak{p}$ because $\Pi_h \mathfrak{p} = \mathfrak{p}$. We again use the equivalence \eqref{eq:fundamental_2} and the previous observation to bound the term $\mathbf{II}_T$ as follows:
\begin{multline*}
 \fint_{T} \omega(x) |\mathbf{V}( \mathbf{q} ) - \mathbf{V}( \nabla \Pi_h v(x) )|^2 \mathrm{d}x
 \lesssim
 \fint_{T} \omega(x) \Phi_{|\mathbf{q}|}(|\mathbf{q} - \nabla \Pi_h v(x)|)  \mathrm{d}x
 \\
 = C
 \fint_{T} \omega(x) \Phi_{|\mathbf{q}|}(|\nabla \Pi_h \mathfrak{p}(x) - \nabla \Pi_h v(x)|)  \mathrm{d}x  \\
 = C \fint_{T} \omega(x) \Phi_{|\mathbf{q}|}(|\nabla \Pi_h ( \mathfrak{p} - v)(x)|)  \mathrm{d}x.
\end{multline*}
We now apply Corollary \ref{co:stability_SZ} with $a = | \mathbf{q} |$ to the function $\mathfrak{p} - v$ and arrive at
\[
  \fint_{T} \omega(x) |\mathbf{V}( \mathbf{q} ) - \mathbf{V}( \nabla \Pi_h v(x) )|^2 \mathrm{d}x
  \lesssim
  \fint_{S_T} \omega(x) \Phi_{|\mathbf{q}|}(| \mathbf{q} - \nabla v(x)|)  \mathrm{d}x.
\]
As a final step, we apply the equivalence \eqref{eq:fundamental_2} once again to derive a bound for $\mathbf{II}_T$:
\[
  \mathbf{II}_T
  \lesssim
  \fint_{S_T} \omega(x) \Phi_{|\mathbf{q}|}(| \mathbf{q} - \nabla v(x)|)  \mathrm{d}x
  \leq C
  \fint_{S_T} \omega(x) |\mathbf{V}( \mathbf{q} ) - \mathbf{V}( \nabla v(x) )|^2 \mathrm{d}x.
\]

We now bound $\mathbf{I}_T$. To do this, we use that $T \subset S_T$ with $|T| \simeq |S_T|$ to obtain
\[
  \mathbf{I}_T
  \leq C
  \fint_{S_T} \omega(x) |\mathbf{V}( \mathbf{q} ) - \mathbf{V}( \nabla v(x) )|^2 \mathrm{d}x.
\]

Combining the bounds for $\mathbf{I}_T$ and $\mathbf{II}_T$ we conclude that, for every $\mathbf{q} \in \mathbb{R}^d$,
\begin{equation}
  \fint_{T} \omega(x) |\mathbf{V}( \nabla v(x) ) - \mathbf{V}( \nabla \Pi_h v(x) )|^2 \mathrm{d}x
  \leq C
  \fint_{S_T} \omega(x) |\mathbf{V}( \mathbf{q} ) - \mathbf{V}( \nabla v(x) )|^2 \mathrm{d}x .
\label{eq:local_approximation_with_q}
\end{equation}
This estimate immediately yields \eqref{eq:local_approximation}.

After we have derived the local approximation result \eqref{eq:local_approximation_with_q}, the a priori error estimate \eqref{eq:local_approximation_corollary} follows from the weighted Poincar\'e inequality \eqref{eq:wPoincare2}. Note that \eqref{eq:local_approximation_with_q} holds for every $\mathbf{q} \in \mathbb{R}^d$ and that $\mathbf{V} : \mathbb{R}^d \rightarrow \mathbb{R}^d$ is surjective. This concludes the proof.
\end{proof}

\subsection{An error estimate}

We conclude with the following a priori error estimate, which is the main result of this section.

\begin{theorem}[error estimate]
\label{thm:ErrEstEqn}
Let $\Phi$ be an $N$-function satisfying Assumption \ref{ass:equivalence}, and let $\omega \in A_\Phi$. Let $u \in W_{0}^{1,\Phi}(\Omega)$ be the solution of \eqref{eq:nonlinear_weighted_problem_weak}, and let $u_h \in \mathbb{V}_h$ be its finite element approximation, which solves \eqref{eq:nonlinear_weighted_problem_discrete}. If $\nabla \mathbf{V}(\nabla u) \in L^2(\omega,\Omega)$, then
\begin{equation}
\| \mathbf{V}(\nabla u) - \mathbf{V}(\nabla u_h) \|_{L^2(\omega,\Omega)}
\lesssim h
\| \nabla \mathbf{V}(\nabla u) \|_{L^2(\omega,\Omega)},
\label{eq:a_priori_error_bound}
\end{equation}
where the hidden constant depends on $\sigma$, $[\omega]_{A_{i(\Phi)}}$, and the characteristics of $\Phi$.
\end{theorem}
\begin{proof}
The error bound \eqref{eq:a_priori_error_bound} follows from the quasi-best approximation property \eqref{eq:best_approximation} in conjunction with the error estimate \eqref{eq:local_approximation_corollary}.
\end{proof}

\begin{remark}[regularity]
Note that Theorem~\ref{thm:ErrEstEqn} assumes $\GRAD \bV(\GRAD u) \in L^2(\omega,\Omega)$. In the unweighted case, \ie $\omega \equiv 1$, such results are proved in \cite{MR4021898,MR4686656,MR4554061,MR2418205,MR2520895}. 
\end{remark}

\section{The obstacle problem}
\label{sec:Obstacle}

We now consider the constrained minimization of $\calJ$ defined in \eqref{eq:DefOfJ}. In the same setting and with the same assumptions as in Section~\ref{sec:WPhiLap}, we additionally assume that we are given $\psi \in C^2(\bar\Omega)$ with $\psi \leq 0$ on $\partial\Omega$ and define
\begin{equation}
\label{eq:DefOfK}
  \calK_\psi \coloneqq \left\{ w \in W^{1,\Phi}_0(\omega,\Omega) : w \geq \psi \mae \emph{in } \Omega \right\}.
\end{equation}
We recall that $\Phi$ is an $N$-function that satisfies Assumption~\ref{ass:equivalence}, and that the weight $\omega$ belongs to $A_\Phi$. The problem we are interested in is to find $u \in \calK_\psi$ such that
\[
  \calJ(u) \leq \calJ(v) \quad \forall v \in \calK_\psi.
\]
A convex minimization argument \cite{MR4381314,MR881725} shows that the minimizer can be equivalently characterized as the solution of the variational inequality: Find $u \in \calK_\psi$ such that
\begin{equation}
\label{eq:VI}
  \int_\Omega \omega(x) \bA(\GRAD u(x)) \cdot \GRAD(u-v)(x) \diff x \leq \int_\Omega \omega(x) f(x) (u-v)(x) \diff x
  \quad \forall v \in \calK_\psi.
\end{equation}

Since it will be useful for what follows, we define $\lambda \in \left(W^{1,\Phi}(\omega,\Omega)\right)'$ as
\begin{equation}
\label{eq:DefOfLagrangeMult}
  \langle \lambda, w \rangle = \int_\Omega \omega(x) \bA(\GRAD u(x)) \cdot \GRAD w(x) \diff x - \int_\Omega \omega(x) f(x) w(x) \diff x.
\end{equation}
We note that \eqref{eq:VI} can thus be rewritten as follows: Find $u \in \calK_\psi$ such that
\[
  \langle \lambda, u - v \rangle \leq 0 \quad \forall v \in \calK_\psi.
\]

The following result describes further properties of $\lambda$.

\begin{proposition}[properties of $\lambda$]
\label{prop:LagrangeMult}
The functional $\lambda$ defined in \eqref{eq:DefOfLagrangeMult} defines a nonnegative Radon measure. In addition, $\lambda$ also satisfies the following properties:
\begin{itemize}
  \item For every $\phi\in C_0^\infty(\Omega)$ that is nonnegative, \ie $\phi\geq0$ in $\Omega$, we have
  $
    \langle \lambda , \phi \rangle \geq 0.
  $
  
  \item The following \emph{complementarity condition} holds:
  $
    \langle \lambda, u - \psi \rangle = 0.
  $
\end{itemize}
\end{proposition}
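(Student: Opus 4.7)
The plan is to establish the three bullets in turn by testing the variational inequality \eqref{eq:VI} against successively more elaborate admissible perturbations of $u$, and then to pass to a limit for the complementarity. Throughout, it is convenient to use \eqref{eq:VI} in the rewritten form $\langle \lambda, u - v\rangle \leq 0$ for every $v \in \calK_\psi$.

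\emph{Nonnegativity on $C_0^\infty$ and the Radon measure claim.}
Fix $\phi \in C_0^\infty(\Omega)$ with $\phi \geq 0$ and set $v = u + \phi$. Because $\phi$ has compact support in $\Omega$, $\phi \in W^{1,\varphi}_0(\omega,\Omega)$, so $v \in W^{1,\varphi}_0(\omega,\Omega)$; since $\phi \geq 0$ we also have $v \geq u \geq \psi$ a.e.~in $\Omega$, whence $v \in \calK_\psi$. The variational inequality then yields $\langle \lambda, -\phi\rangle \leq 0$, i.e., $\langle \lambda,\phi\rangle \geq 0$, proving the first listed property. This means that $\lambda$ restricts to a positive distribution on $\Omega$, and Schwartz's classical representation theorem identifies this restriction with a unique nonnegative Radon measure on $\Omega$, proving the Radon measure claim.

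\emph{A localized complementarity identity.}
Fix $\phi \in C_0^\infty(\Omega)$ with $0 \leq \phi \leq 1$ and consider the two-sided perturbations $v_\pm \coloneqq u \pm \phi(u-\psi)$. The product $\phi(u-\psi) = \phi u - \phi\psi$ lies in $W^{1,\varphi}_0(\omega,\Omega)$: the piece $\phi u$ is the product of a $C_0^\infty(\Omega)$ function with an element of $W^{1,\varphi}_0(\omega,\Omega)$ and inherits compact support in $\Omega$, while $\phi\psi \in C_0^\infty(\Omega)$ since $\psi \in C^2(\bar\Omega)$. Moreover $v_+ - \psi = (1+\phi)(u-\psi) \geq 0$, and $v_- - \psi = (1-\phi)(u-\psi) \geq 0$ thanks to $0 \leq \phi \leq 1$, so $v_\pm \in \calK_\psi$. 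Substituting $v_\pm$ into \eqref{eq:VI} and combining the two resulting one-sided inequalities yields
\[
  \langle \lambda, \phi(u-\psi)\rangle = 0 \qquad \forall \phi \in C_0^\infty(\Omega),\ 0 \leq \phi \leq 1.
\]

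\emph{Passage to the limit $\phi \to 1$ — the main obstacle.}
The nontrivial difficulty is to promote the above localized identity to the global statement $\langle \lambda, u-\psi\rangle = 0$, because $u-\psi$ does not generally belong to $W^{1,\varphi}_0(\omega,\Omega)$ (its trace equals $-\psi_{|\partial\Omega}$, which is only known to be nonpositive). I would choose a sequence $\phi_n \in C_0^\infty(\Omega)$ with $0 \leq \phi_n \leq 1$ and $\phi_n \nearrow 1$ pointwise in $\Omega$ (e.g., smoothed cutoffs of $\{x \in \Omega : \dist(x,\partial\Omega) \geq 1/n\}$). Using the Radon measure representation of $\lambda$ from the previous step and the fact that $u-\psi \geq 0$, the localized identity becomes $\int_\Omega \phi_n(u-\psi) \diff\lambda = 0$, and the monotone convergence theorem forces $\int_\Omega (u-\psi) \diff\lambda = 0$. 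To conclude, it remains to identify this measure integral with the dual pairing $\langle \lambda, u-\psi\rangle$ coming from \eqref{eq:DefOfLagrangeMult}. This identification follows by applying \eqref{eq:DefOfLagrangeMult} to the admissible test element $\phi_n(u-\psi) \in W^{1,\varphi}_0(\omega,\Omega)$ and then invoking dominated convergence (with majorants supplied by Young's inequality together with $\bA(\GRAD u) \in L^{\varphi^*}(\omega,\Omega)$ and $f \in L^{\varphi^*}(\omega,\Omega)$) to pass to the limit in $\int_\Omega \omega\, \bA(\GRAD u)\cdot\GRAD[\phi_n(u-\psi)]\diff x - \int_\Omega \omega f\,\phi_n(u-\psi)\diff x$.
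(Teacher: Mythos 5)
Your first bullet and the Radon--measure claim follow exactly the paper's argument (test \eqref{eq:VI} with $v=u+\phi$, then invoke the Riesz--Schwartz theorem), so there is nothing to compare there. For the complementarity condition your route is genuinely different: the paper localizes to the noncoincidence set $\Omega^+=\{u>\psi\}$, uses one-sided perturbations $v=u-\epsilon\phi$ with $\phi\in C_0^\infty(\Omega^+)$ to show that the measure $\lambda$ does not charge $\Omega^+$, and then concludes from the decomposition $\langle\lambda,u-\psi\rangle=\int_{\Omega^+}(u-\psi)\diff\lambda$ together with an approximation argument. Your two-sided perturbation $v_\pm=u\pm\phi(u-\psi)$ with $0\le\phi\le1$ is an attractive alternative: it yields the localized identity $\langle\lambda,\phi(u-\psi)\rangle=0$ in one stroke, avoids any discussion of the coincidence set, and sidesteps the slightly delicate smallness-of-$\epsilon$ step in the paper (it does require checking $\phi(u-\psi)\in W^{1,\varphi}_0(\omega,\Omega)$, which is fine via the product rule and approximation of $u$ by $C_0^\infty$ functions). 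Up to this point, and including the monotone-convergence step at the level of the measure, your argument delivers $\int_\Omega(u-\psi)\diff\lambda=0$, which is the content the paper actually establishes (the paper, too, silently interprets the pairing $\langle\lambda,u-\psi\rangle$ as this measure integral).

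The genuine gap is in your final identification step. You propose to recover $\langle\lambda,u-\psi\rangle$ in the sense of \eqref{eq:DefOfLagrangeMult} by passing to the limit in $\int_\Omega\omega\,\bA(\GRAD u)\cdot\GRAD[\phi_n(u-\psi)]\diff x-\int_\Omega\omega f\,\phi_n(u-\psi)\diff x$ via dominated convergence. But $\GRAD[\phi_n(u-\psi)]=\phi_n\GRAD(u-\psi)+(u-\psi)\GRAD\phi_n$, and the second term is not dominated uniformly in $n$: $|\GRAD\phi_n|\sim n$ on a boundary strip of width $\sim 1/n$, while $u-\psi$ does \emph{not} vanish on $\partial\Omega$ (its trace is $-\psi_{|\partial\Omega}\ge 0$), so this boundary term is neither majorized nor obviously negligible. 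Hence the claimed convergence $\langle\lambda,\phi_n(u-\psi)\rangle\to\langle\lambda,u-\psi\rangle$ in the duality-pairing sense is unjustified as written. The repair is to not route the limit through \eqref{eq:DefOfLagrangeMult} at all: state and use the complementarity at the measure level (as the paper does), identifying $\langle\lambda,w\rangle$ with $\int_\Omega w\diff\lambda$ only for the compactly supported admissible functions $w=\phi_n(u-\psi)$ (by mollifying $w$ at fixed $n$, where both the pairing and the measure integral pass to the limit), and then conclude by monotone convergence exactly as in your third paragraph. A related, smaller caveat --- shared with the paper --- is that integrating the merely a.e.-defined function $u-\psi$ against the possibly singular measure $\lambda$ implicitly uses such an approximation/quasi-continuity argument.
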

\begin{proof}
Let $0 \leq \phi \in C_0^\infty(\Omega)$.
Define $v = u + \phi \in W^{1,\Phi}_0(\omega,\Omega)$. Since $v$ satisfies $v \geq u \geq \psi$ in $\Omega$, $v$ is admissible in \eqref{eq:VI}. We can therefore replace $v$ in the variational inequality \eqref{eq:VI} and obtain
$
  0 \geq \langle \lambda , u -(u+\phi) \rangle = - \langle \lambda , \phi \rangle.
$
This proves the first property. We also note that this shows that $\lambda$ defines a nonnegative distribution. The Riesz-Schwartz theorem (see \cite[Chapter 1,~\S4, Th\'eor\`eme V]{MR209834} and \cite[Chapter 1, Sec.~1.7, Theorem II]{MR2012831}) then shows that $\lambda$ is a nonnegative Radon measure.

To obtain the second property, we define the coincidence and noncoincidence sets
\[
  \mathcal{C} = \{ x \in \Omega : u(x) = \psi(x) \},
  \qquad
  \Omega^+ = \Omega \setminus \mathcal{C} = \{ x \in \Omega : u(x) > \psi(x) \}.
\]
With these sets at hand, we can then write
\begin{equation}
\label{eq:decomposition}
  \langle \lambda , u - \psi \rangle = \int_{\mathcal{C}} (u-\psi)(x) \diff \lambda(x) + \int_{\Omega^+} (u-\psi)(x) \diff \lambda(x) = \int_{\Omega^+} (u-\psi)(x) \diff \lambda(x).
\end{equation}
Now let $\mathcal{N} \subset \Omega^+$ be open, and let $\phi \in C_0^\infty(\mathcal{N})$. Consider $v = u-\epsilon \phi \in W^{1,\Phi}_0(\omega,\Omega)$. For $\epsilon >0$, but sufficiently small, we have that $v  \in \calK_\psi$. In fact, we have that $u(x) > \psi(x)$ up to a null set in $\Omega^+$. Then, for a sufficiently small $\epsilon>0$, we obtain that $u(x) - \epsilon \phi(x) \geq \psi(x)$ in $\Omega$. If we replace this function $v$ in \eqref{eq:VI}, we obtain
\[
  0 \geq \langle \lambda , u - (u - \epsilon \phi) \rangle = \epsilon \langle \lambda, \phi \rangle.
\]
From this and the property that $\langle \lambda , \phi \rangle \geq 0$ for every $\phi\in C_0^\infty(\Omega)$ that is nonnegative, we conclude that $\langle \lambda, \phi \rangle = 0$ for every $\phi \in C_0^\infty(\Omega^+)$ that is nonnegative, \ie
\[
  \int_{\Omega^+} \phi(x) \diff \lambda(x) = 0 \quad \forall \phi \in C_0^\infty(\Omega^+): \ \phi \geq 0 \text{ in }  \Omega^+.
\]
Since $u-\psi > 0$ in $\Omega^+$, an approximation argument and \eqref{eq:decomposition} then show that
$
  \langle \lambda , u - \psi \rangle =  0
$
as claimed.
\end{proof}

\subsection{Finite element discretization}

To approximate the solution of \eqref{eq:VI}, we retain the notation and constructions from Section~\ref{sec:FEM}. In addition, we need to introduce the positivity preserving interpolant $\wp_h : L^1(\Omega) \to \polV_h$, which was originally developed in \cite{MR1742264}. Let $\Vertices = \{ \textv \}$ be the collection of vertices of $\T$ and let $\VerticesIn \coloneqq \Vertices \cap \Omega$. Since $\polV_h$ consists of piecewise linears, there is a bijection between $\VerticesIn$ and the canonical Lagrange basis $\{\phi_\textv\}_{\textv \in \VerticesIn}$ of $\polV_h$. Given $\textv \in \VerticesIn$, we let
\[
  S_{\textv} = \bigcup \left\{ T \in \T: T \ni \textv \right\}.
\]
For $\textv \in \VerticesIn$, we define $\mathcal{B}_\textv$ to be the largest ball centered in $\textv$ and contained in $S_\textv$. The positivity preserving interpolant $\wp_h$ is defined as follows: for $w \in L^1(\Omega)$,
\begin{equation}
\label{eq:DefofChenRhn}
  \wp_h w = \sum_{\textv \in \VerticesIn} \left( \fint_{\mathcal{B}_\textv} w(x) \diff x \right) \phi_\textv.
\end{equation}
Clearly, $w \geq 0$ in $\Omega$ implies $\wp_h w \geq 0$ in $\Omega$. Further properties will be detailed below.

With the interpolant $\wp_h$ at hand, we can define the discrete admissible set
\begin{equation}
\label{eq:DefofKh}
  \calK_{h,\psi} \coloneqq \left\{ w_h \in \polV_h : w_h \geq \wp_h \psi \text{ in } \Omega \right\}.
\end{equation}

The finite element approximation of the solution to \eqref{eq:VI} is $u_h \in \calK_{h,\psi}$, which satisfies the following variational inequality for every $v_h \in \calK_{h,\psi}$: 
\begin{equation}
\label{eq:VIh}
  \int_\Omega \omega(x) \bA(\GRAD u_h(x)) \cdot \GRAD(u_h-v_h)(x) \diff x \leq \int_\Omega \omega(x) f(x) (u_h-v_h)(x) \diff x.
\end{equation}

Standard arguments can be used to prove the existence and uniqueness of a solution of \eqref{eq:VIh}.

\subsection{A priori error bounds}

We now analyze error bounds for our scheme.

\begin{proposition}[first error bound]
\label{prop:FirstBoundObstacle}
Let $u \in \calK_\psi$ and $u_h \in \calK_{h,\psi}$ be the solutions of \eqref{eq:VI} and \eqref{eq:VIh}, respectively. If $i(\Phi) > 2$ and $\lambda \in L^2(\omega^{-1},\Omega)$, then we have
\begin{align*}
  \| \bV(\GRAD u) - \bV(\GRAD u_h) \|_{L^2(\omega,\Omega)}^2 &\lesssim
  \| \lambda \|_{L^2(\omega^{-1},\Omega)} \| \left[ u-\psi \right] - \wp_h\left[ u- \psi\right] \|_{L^2(\omega,\Omega)} \\
    &+ \int_\Omega \omega(x) |\bV(\GRAD u(x)) - \bV(\GRAD \wp_h u(x))|^2 \diff x,
\end{align*}
where $\wp_h$ is the positivity preserving interpolant defined in \eqref{eq:DefofChenRhn}. In the previous estimate, the hidden constant is independent of $h$.
\end{proposition}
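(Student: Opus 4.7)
The plan is to start from the equivalence \eqref{eq:fundamental_1} to rewrite
\[
  \mathfrak{E} \coloneqq \int_\Omega \omega |\bV(\GRAD u) - \bV(\GRAD u_h)|^2 \diff x
  \simeq \int_\Omega \omega \bigl(\bA(\GRAD u) - \bA(\GRAD u_h)\bigr)\cdot \GRAD(u-u_h)\diff x,
\]
and then reduce the right-hand side to manageable pieces. For the $\bA(\GRAD u)$ part I will invoke the definition \eqref{eq:DefOfLagrangeMult} of $\lambda$ to obtain a $\langle\lambda,u-u_h\rangle + \int \omega f(u-u_h)$ term. For the $\bA(\GRAD u_h)$ part, the natural choice of test function in \eqref{eq:VIh} is $v_h = \wp_h u$; this is admissible since $\wp_h$ is linear and monotone (the basis functions and the local averages are nonnegative) and $u\geq\psi$ implies $\wp_h u \geq \wp_h\psi$. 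Subtracting the two yields, after the $f$ contributions cancel,
\[
  \mathfrak{E} \lesssim \langle\lambda, \wp_h u - u_h\rangle + \int_\Omega \omega \bigl(\bA(\GRAD u) - \bA(\GRAD u_h)\bigr)\cdot \GRAD(u-\wp_h u)\diff x.
\]

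The delicate step is the measure term. I will split
\[
  \langle\lambda, \wp_h u - u_h\rangle = \langle\lambda, \wp_h(u-\psi)\rangle + \langle\lambda, \wp_h\psi - u_h\rangle.
\]
Because $u_h\in\calK_{h,\psi}$ gives $\wp_h\psi - u_h\leq 0$ pointwise and $\lambda$ is a nonnegative Radon measure (Proposition~\ref{prop:LagrangeMult}), the second bracket is nonpositive and can be dropped. The complementarity relation $\langle\lambda,u-\psi\rangle=0$ lets me rewrite the first bracket as $\langle\lambda, \wp_h(u-\psi) - (u-\psi)\rangle$, after which a weighted Cauchy--Schwarz inequality
\[
  |\langle\lambda,g\rangle| = \Bigl|\int_\Omega \lambda\,\omega^{-1/2}\cdot g\,\omega^{1/2}\diff x\Bigr|\leq \|\lambda\|_{L^2(\omega^{-1},\Omega)}\|g\|_{L^2(\omega,\Omega)}
\]
yields precisely the first term on the right-hand side of the claimed estimate. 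The hypotheses $u,\psi \in L^2(\omega,\Omega)$ and $\lambda\in L^2(\omega^{-1},\Omega)$ ensure that all integrals are finite.

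Finally, for the bilinear remainder I use the equivalence \eqref{eq:fundamental_4} to write $|\bA(\GRAD u) - \bA(\GRAD u_h)|\simeq \varphi'_{|\GRAD u|}(|\GRAD u - \GRAD u_h|)$, then apply the shifted Young inequality \eqref{eq:shifted_Young_inequality} pointwise with the shifted $N$-function $\varphi_{|\GRAD u(x)|}$:
\[
  \varphi'_{|\GRAD u|}(|\GRAD u - \GRAD u_h|)\,|\GRAD(u-\wp_h u)| \leq \delta\,\varphi_{|\GRAD u|}(|\GRAD u - \GRAD u_h|) + C_\delta\,\varphi_{|\GRAD u|}(|\GRAD(u-\wp_h u)|).
\]
Integrating against $\omega$ and using \eqref{eq:fundamental_2} to return to the $|\bV(\cdot)-\bV(\cdot)|^2$ form recasts the bound as $\delta\,\mathfrak{E} + C_\delta \int_\Omega \omega|\bV(\GRAD u) - \bV(\GRAD \wp_h u)|^2$. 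Choosing $\delta$ small enough to absorb the first term into the left-hand side completes the argument.

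The main obstacle is the Lagrange multiplier bookkeeping: because the discrete constraint uses $\wp_h\psi$ rather than $\psi$, one cannot test \eqref{eq:VI} directly with $u_h$, and conversely $u_h\geq\psi$ is not available. The device of splitting $\wp_h u - u_h$ through the intermediate level $\wp_h\psi$, together with the linearity of $\wp_h$ and the complementarity condition, is what reconciles these mismatched constraints and brings the interpolation error $(u-\psi)-\wp_h(u-\psi)$ onto the scene.
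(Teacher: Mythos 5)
Your proof is correct and follows essentially the same route as the paper: both exploit \eqref{eq:fundamental_1} to rewrite the error as a duality pairing, split $u-u_h$ through $\wp_h u$, bound the measure contribution $\langle\lambda,\wp_h u-u_h\rangle$ via the same three-way decomposition using complementarity, the sign of $\wp_h\psi-u_h$, and weighted Cauchy--Schwarz, and handle the remaining bilinear term with the shifted Young inequality and absorption. The only cosmetic difference is that you invoke \eqref{eq:fundamental_4} directly where the paper passes through \eqref{eq:fundamental_3} and \eqref{eq:Phi_pp_p}, which amounts to the same thing.
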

\begin{proof}
We first note that since $i(\Phi) > 2$, Proposition~\ref{prop:LPhiweightIntoL2weight} guarantees that $u \in W^{1,2}_0(\omega,\Omega)$. Next, we set some notation. We define the error $e := u-u_h$ and the auxiliary function $z := u - \wp_h u$. We note that $\wp_h u \in \calK_{h,\psi}$.

With the help of \eqref{eq:fundamental_1} we can now derive the following bound:
\[
  \| \bV(\GRAD u) - \bV(\GRAD u_h) \|_{L^2(\omega,\Omega)}^2 \lesssim \int_\Omega \omega(x) \left[ \bA(\GRAD u(x)) - \bA(\GRAD u_h(x)) \right] \cdot \GRAD e(x) \diff x \eqqcolon \mathfrak{D}.
\]
We further decompose $\mathfrak{D}$ as follows:
\begin{align*}
  \mathfrak{D} &= \int_\Omega \omega(x) \left[ \bA(\GRAD u(x)) - \bA(\GRAD u_h(x)) \right] \cdot \GRAD z(x) \diff x \\
    &+ \int_\Omega \omega(x) \left[ \bA(\GRAD u(x)) - \bA(\GRAD u_h(x)) \right] \cdot \GRAD(\wp_h u - u_h)(x) \diff x
    \eqqcolon \mathfrak{L} + \mathfrak{N}.
\end{align*}

Let us first bound the term $\mathfrak{N}$. To do this, we use the discrete variational inequality \eqref{eq:VIh} and obtain
\begin{align*}
  \mathfrak{N} &= \int_\Omega \omega(x) \bA(\GRAD u(x)) \cdot \GRAD(\wp_h u-u_h)(x) \diff x \\
    &+ \int_\Omega \omega(x) \bA(\GRAD u_h(x)) \cdot \GRAD(u_h-\wp_h u)(x) \diff x \\
  &\leq \int_\Omega \omega(x) \bA(\GRAD u(x)) \cdot \GRAD(\wp_h u-u_h)(x) \diff x 
    + \int_\Omega \omega(x) f(x) (u_h - \wp_h u)(x) \diff x \\
  &= \langle \lambda, \wp_h u - u_h \rangle,
\end{align*}
where $\lambda$ is defined in \eqref{eq:DefOfLagrangeMult}. We can then continue and write
\begin{align*}
  \mathfrak{N}
  &\leq
  \langle \lambda, u-\psi \rangle + \langle \lambda , \wp_h \psi - u_h \rangle + \left\langle \lambda, \left[ \wp_h u - \wp_h \psi \right] - \left[ u - \psi \right] \right\rangle \\
  &=
  \langle \lambda , \wp_h \psi - u_h \rangle + \left\langle \lambda, \left[ \wp_h u - \wp_h \psi \right] - \left[ u- \psi \right] \right\rangle,
\end{align*}
where we have used the complementarity condition $\langle \lambda, u-\psi \rangle = 0$ from Proposition~\ref{prop:LagrangeMult}. Let us now examine each of the remaining terms separately.
\begin{itemize}
  \item Since $u_h \in \calK_{h,\psi}$, we have $u_h \geq \wp_h \psi$. This and the fact that $\lambda$ is a nonnegative Radon measure (see Proposition~\ref{prop:LagrangeMult}) allows us to obtain
  \[
    \langle \lambda , \wp_h \psi - u_h \rangle = \int_\Omega (\wp_h \psi - u_h)(x) \diff\lambda(x) \leq 0.
  \]
  
  \item Since we have assumed that $\lambda \in L^2(\omega^{-1},\Omega)$, the fact that $\wp_h$ is linear shows that
  \begin{align*}
    \left\langle \lambda, \left[ \wp_h u- \wp_h \psi \right] - \left[ u- \psi \right] \right\rangle &=
    \left\langle \lambda, \wp_h\left[ u- \psi\right] - \left[ u-\psi \right] \right\rangle \\
      &\leq 
    \| \lambda \|_{L^2(\omega^{-1},\Omega)} \| \left[ u-\psi \right] - \wp_h\left[ u- \psi\right] \|_{L^2(\omega,\Omega)}.
  \end{align*}
  To obtain the last estimate, we used H\"older's inequality.
\end{itemize}
  To summarize, we have come to the following conclusion:
\[
  \mathfrak{N} \leq \| \lambda \|_{L^2(\omega^{-1},\Omega)} \| \left[ u-\psi \right] - \wp_h\left[ u- \psi \right] \|_{L^2(\omega,\Omega)}.
\]

To estimate the term $\mathfrak{L}$, we can repeat the arguments we used to control the term $\mathfrak{I}$ in the proof of Theorem~\ref{thm:CeaLin}. These arguments lead us to the conclusion that
\[
   \mathfrak{L} \leq C \delta \| \bV(\GRAD u) - \bV(\GRAD u_h) \|_{L^2(\omega,\Omega)}^2 + C_\delta \int_\Omega \omega(x) \Phi_{|\GRAD u(x)|}(|\GRAD z(x)|) \diff x.
\]
If we insert the estimates for $\mathfrak{L}$ and $\mathfrak{N}$ into the bound in which $\mathfrak{D}$ is defined, we obtain
 \begin{multline*}
   \| \bV(\GRAD u) - \bV(\GRAD u_h) \|_{L^2(\omega,\Omega)}^2 \leq C \delta \| \bV(\GRAD u) - \bV(\GRAD u_h) \|_{L^2(\omega,\Omega)}^2
   \\
   + C_\delta \int_\Omega \omega(x) \Phi_{|\GRAD u(x)|}(|\GRAD z(x)|) \diff x + C \| \lambda \|_{L^2(\omega^{-1},\Omega)} \| \left[ u-\psi \right] - \wp_h\left[ u- \psi \right] \|_{L^2(\omega,\Omega)}.
\end{multline*}
The term $\| \bV(\GRAD u) - \bV(\GRAD u_h) \|_{L^2(\omega,\Omega)}^2$ appearing on the right hand side of the previous bound can be absorbed on the left hand side if $\delta$ is chosen carefully. This shows that
\begin{multline*}
  \| \bV(\GRAD u) - \bV(\GRAD u_h) \|_{L^2(\omega,\Omega)}^2 \lesssim  \| \lambda \|_{L^2(\omega^{-1},\Omega)} \| \left[ u- \psi \right] - \wp_h\left[ u- \psi \right] \|_{L^2(\omega,\Omega)}
  \\
  + \int_\Omega \omega(x) \Phi_{|\GRAD u(x)|}(|\GRAD z(x)|) \diff x, \qquad z = u - \wp_h u.
\end{multline*}
Use the equivalences \eqref{eq:fundamental_1}--\eqref{eq:fundamental_2} one last time to obtain the assertion and conclude.
\end{proof}

\subsection{The positivity preserving interpolant}
\label{sub:Crhn}

Note that up to this point no property other than positivity has been required of $\wp_h$. In the following, however, the stability and approximation properties of $\wp_h$ are required. Ideally, we would like to have an operator that is a projection and satisfies \eqref{eq:stability_SZ}. Note, however, that $\wp_h$ is not a projection. In fact, it is not possible to construct a positivity preserving projection; see \cite{MR1933037}. For this reason, we need to prove suitable properties for $\wp_h$.

\begin{lemma}[properties of $\wp_h$]
\label{lem:propChenRhn}
The interpolant $\wp_h$ defined in \eqref{eq:DefofChenRhn} satisfies the following properties.
\begin{enumerate}
  \item $\wp_h$ is positivity preserving: If $w \geq 0$ in $\Omega$, then $\wp_h w\geq 0$ in $\Omega$.
  
  \item $\wp_h$ is symmetric in the sense that: If $\textv \in \VerticesIn$ and $w_{|\mathcal{B}_\textv} \in \polP_1$, then $\wp_h w(\textv) = w(\textv)$.
  
  \item $\wp_h$ is locally invariant: For every $T \in \T$ with $T \cap \partial \Omega = \emptyset$, if $w_{|S_T} \in \polP_1$, then $\wp_h w_{|T}= w_{|T}$.
  
  \item $\wp_h$ is locally stable: For all $p \in [1,\infty]$ and every $T \in \T$, we have
  \begin{align*}
    \| \wp_h w \|_{L^p(T)} &\lesssim \| w \|_{L^p(S_T)} \quad \forall w \in L^p(\Omega), \\
    \| \GRAD \wp_h w \|_{L^p(T)} &\lesssim \| \GRAD w \|_{L^p(S_T)} \quad \forall w \in W^{1,p}_0(\Omega).
  \end{align*}
  
  \item $\wp_h$ is weighted Orlicz stable: Let $\Phi$ be an $N$-function that satisfies Assumption~\ref{ass:equivalence}, and let $\omega \in A_{\Phi}$. Then, for every $T \in \T$ and for all $a \geq 0$,
  \[
    \fint_T \omega(x) \Phi_a(|\GRAD \wp_h w(x)|) \diff x \lesssim \fint_{S_T} \omega(x) \Phi_a(|\GRAD w(x)|) \diff x,
  \]
  where the hidden constant does not depend on $T$ or $a$.
  
  \item $\wp_h$ has weighted approximation properties: If $\omega \in A_2$ and $w \in W^{1,2}_0(\omega,\Omega)$, then, for all $T \in \T$,
  \[
    \| w - \wp_h w \|_{L^2(\omega,T)} \lesssim h_T \| \GRAD w \|_{L^2(\omega,S_T)}.
  \]
\end{enumerate}
\end{lemma}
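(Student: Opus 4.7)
The plan is to establish the six items in order, since each uses properties of the previous ones, and to reuse the template from the proof of Theorem~\ref{thm:WStabSZ} whenever possible.

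Items (1)--(3) are essentially immediate from the definition. Part (1) follows because each coefficient $\fint_{\mathcal{B}_\textv}w$ is nonnegative and each $\phi_\textv\geq 0$. For (2) one uses the classical fact that the mean value of an affine function over a ball equals its value at the center, so $\fint_{\mathcal{B}_\textv}w=w(\textv)$ whenever $w|_{\mathcal{B}_\textv}\in\polP_1$. Item (3) then reduces to (2) vertex by vertex: for an interior $T$ and any vertex $\textv$ of $T$, we have $\mathcal{B}_\textv\subset S_\textv\subset S_T$, so $\wp_hw(\textv)=w(\textv)$, and since both $\wp_hw|_T$ and $w|_T$ are affine and agree at the $d+1$ vertices of $T$, they coincide.

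For (4), the starting point is the pointwise bound $|\fint_{\mathcal{B}_\textv}w|\lesssim|\mathcal{B}_\textv|^{-1/p}\|w\|_{L^p(\mathcal{B}_\textv)}$ from H\"older's inequality, combined with shape regularity ($|\mathcal{B}_\textv|\simeq|T|$), the estimate $\|\phi_\textv\|_{L^p(T)}\lesssim|T|^{1/p}$, and the fact that at most $d+1$ basis functions are supported on $T$; this yields the $L^p$ stability. For the gradient estimate, on any interior $T$ the operator $\wp_h$ reproduces constants by (3), so for any $c\in\R$ one has $\GRAD\wp_hw=\GRAD\wp_h(w-c)$; an inverse inequality on $T$ together with the just-proved $L^p$ stability and Poincar\'e's inequality on $S_T$ then gives the bound, and for $T$ touching $\partial\Omega$ the condition $w\in W^{1,p}_0(\Omega)$ together with Poincar\'e on $S_T\cap\Omega$ recovers the missing boundary coefficients.

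Item (5) follows the blueprint of Theorem~\ref{thm:WStabSZ} verbatim. Since $\GRAD\wp_hw|_T$ is constant, the $L^1$ case of (4) gives the pointwise bound $|\GRAD\wp_hw(x)|\lesssim\fint_{S_T}|\GRAD w|$ for $x\in T$; applying monotonicity of $\Phi_a$, Jensen's inequality, and the $A_1$-condition \eqref{eq:A_1_condition} exactly as in the proof of Theorem~\ref{thm:WStabSZ} produces the modular estimate, with uniformity in $a$ inherited from the uniform bound on $\Delta_2(\Phi_a)$ recalled in \eqref{eq:shifted_N_functions}. Finally for (6), one proceeds as in the proof of Theorem~\ref{thm:local_best_SZ}: let $w_{S_T}$ be the constant provided by the weighted Poincar\'e inequality \eqref{eq:wPoincare2}, write $w-\wp_hw=(w-w_{S_T})-\wp_h(w-w_{S_T})$ (using (3) on interior elements, and $w\in W^{1,2}_0(\omega,\Omega)$ on boundary elements), apply the weighted $L^2$-analog of (4) to the second term, and then invoke \eqref{eq:wPoincare2} to swap the $L^2(\omega,S_T)$ norm of $w-w_{S_T}$ for $h_T\|\GRAD w\|_{L^2(\omega,S_T)}$.

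The main obstacle is, as always with this interpolant, the treatment of elements touching $\partial\Omega$: because $\wp_h$ is \emph{not} a projection and does \emph{not} reproduce constants globally, the reduction to Poincar\'e requires careful bookkeeping of which vertex averages are present in the sum defining $\wp_h$ and exploiting $W^{1,p}_0$ to compensate for the missing boundary terms. Once this is handled, the weighted Orlicz stability in (5) and the approximation bound in (6) follow the Scott--Zhang template with only notational changes.
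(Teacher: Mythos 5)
Items (1)--(5) are fine and follow essentially the same route as the paper: (1)--(3) are direct from the definition and the mean-value property of affine functions on balls; (4) is cited from \cite{MR1742264} in the paper, where you sketch a proof (acceptable); and (5) reuses the $L^1$ reduction from the stability estimate of (4) plus Jensen, the $A_1$-condition, and the uniform $\Delta_2$-bound on the shifts, exactly as in the Scott--Zhang case.

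The problem is item (6). You write $w-\wp_hw=(w-w_{S_T})-\wp_h(w-w_{S_T})$, which requires $\wp_h w_{S_T}=w_{S_T}$ on the element $T$. This identity holds on interior elements (by local invariance), but it is \emph{false} on boundary elements: by construction $\wp_h$ assigns the value $0$ at boundary vertices, so $\wp_h c\neq c$ there for any nonzero constant $c$. Appealing to $w\in W^{1,2}_0(\omega,\Omega)$ does not repair an algebraic identity that only concerns $\wp_h$ and the constant. The natural repair --- take $w_{S_T}=0$ on boundary elements and use a Poincar\'e inequality for $w$ on $S_T$ --- needs a weighted Poincar\'e inequality for functions vanishing only on a portion of $\partial S_T$ (namely $\partial\Omega\cap\partial S_T$), which is not among the tools the paper has set up (both \eqref{eq:wPoincare} and \eqref{eq:wPoincare2} are of a different type). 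You would also need a \emph{weighted} $L^2$-stability $\|\wp_h v\|_{L^2(\omega,T)}\lesssim\|v\|_{L^2(\omega,S_T)}$, which is not contained in item (4) (that item is unweighted) and is never stated; it can be proved by a Jensen argument, but you have not said so.

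The paper's proof of (6) takes a different and more elementary route that sidesteps both issues: it establishes the Bramble--Hilbert type bound
\[
\| \wp_h w- \frakp \|_{L^2(\omega,T)} \lesssim \| w - \frakp \|_{L^2(\omega,S_T)} + h_T \| \GRAD( w - \frakp )\|_{L^2(\omega,S_T)}
\]
for arbitrary affine $\frakp$ by estimating $|(\wp_h w-\frakp)(\textv)|$ at each vertex of $T$: for interior vertices this is a ball average controlled via Cauchy--Schwarz by the weighted $L^2$ norm on $S_T$, and for boundary vertices one uses $\wp_h w(\textv)=0$, the fact that $w=0$ on the boundary face $F$, and the scaled trace inequality \eqref{eq:ScaledTrace} to control $|\frakp(\textv)|$ by $\fint_F|w-\frakp|$ and hence by the right-hand side. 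This is exactly where $w\in W^{1,2}_0(\omega,\Omega)$ enters, through the trace, not through a Poincar\'e-type inequality. With \eqref{eq:BrambleHilbert} in hand, the triangle inequality and \eqref{eq:wPoincare2} finish the argument. You should replace your argument for (6) with this vertexwise trace analysis, or else supply the missing weighted partial-trace Poincar\'e inequality and the weighted $L^2$-stability as separate lemmas.
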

\begin{proof}
We examine each statement individually.
\begin{enumerate}
  \item This property follows directly from the definition of $\wp_h$.
  
  \item Since $w_{|\mathcal{B}_\textv} \in \polP_1$, we write $w(x) = a + \bb\cdot(x-\textv)$ for $a \in \R$ and $\bb \in \R^d$. We have
  \[
    \wp_h w(\textv) = \fint_{\mathcal{B}_\textv} \left[ a + \bb\cdot(x-\textv) \right] \diff x = a + \bb \cdot \fint_{\mathcal{B}_\textv} (x-\textv) \diff x = a = w(\textv).
  \]

  \item This follows from the repetition of the previous calculation for each vertex of $T$, which forms an unisolvent set for $\polP_1(T)$.

  \item This is proved in \cite[Lemma 3.1]{MR1742264}.
  
  \item This follows from the preceding stability properties by repeating the arguments in the proofs of Theorem~\ref{thm:WStabSZ} and Corollary~\ref{cor:WStabSZ}.
  
  \item Let $T \in \T$, and let $w \in W^{1,2}_0(\omega,\Omega)$. Let $\frakp \in \polP_1(\R^d)$ be arbitrary. We begin the proof by noting that if we can show the bound
  \begin{equation}
  \label{eq:BrambleHilbert}
    \| \wp_h w- \frakp \|_{L^2(\omega,T)} \lesssim \| w - \frakp \|_{L^2(\omega,S_T)} + h_T \| \GRAD( w - \frakp )\|_{L^2(\omega,S_T)},
  \end{equation}
  then a simple application of the triangle inequality shows
  \[
    \| w-\wp_h w\|_{L^2(\omega,T)} \lesssim \| w - \frakp \|_{L^2(\omega,S_T)} + h_T \| \GRAD( w - \frakp )\|_{L^2(\omega,S_T)}.
  \]
  A suitable choice of $\frakp$ shall then imply the thesis.
  
  To prove \eqref{eq:BrambleHilbert}, we proceed differently depending on whether $T$ is an interior element or $T$ touches the boundary. If $T$ is an interior element, \ie $T \cap \partial \Omega = \emptyset$, then local invariance and stability in $L^2(\omega,T)$ imply the result. On the other hand, if the element $T$ touches the boundary we proceed as follows. Let $\{\textv_i^T \}_{i=0}^d$ be the collection of vertices of $T$. Since $(\wp_h w - \frakp)_{|T} \in \polP_1$, we can write
  \[
    (\wp_h w - \frakp)_{|T} = \sum_{i=0}^d (\wp_h w - \frakp)(\textv_i^T) \phi_{\textv_i^T}.
  \]
  We now use that $\| \phi_\textv \|_{L^\infty(\Omega)} = 1$ for all $\textv \in \Vertices$ to obtain the estimate
  \[
    \| \wp_h w - \frakp \|_{L^2(\omega,T)}^2 \lesssim \max_{i=0}^d \left| (\wp_h w - \frakp)(\textv_i^T) \right|^2 \omega(T).
  \]
  Now we distinguish two cases. If $\textv_i^T \in \VerticesIn$, then
  \begin{align*}
    \left| (\wp_h w - \frakp)(\textv_i^T) \right| &= \left| \wp_h (w - \frakp)(\textv_i^T) \right| \leq \fint_{\mathcal{B}_{\textv_i^T}} | w(x) - \frakp(x)| \diff x \\
    &\lesssim h_T^{-d} \omega^{-1}(T)^{\frac{1}{2}} \| w - \frakp \|_{L^2(\omega,S_T)}.
  \end{align*}
  On the other hand, if $\textv_i^T \in \partial \Omega$, then $\wp_h w(\textv_i^T) = 0$ by definition. Let $F \subset T$ be the $(d-1)$-dimensional subsimplex containing $\textv_i^T$ such that $F \subset \partial \Omega$. We thus have
  \[
    \left| (\wp_h w - \frakp)(\textv_i^T) \right| = |\frakp(\textv_i^T)| \lesssim \fint_{F} |\frakp(x)| \diff x = \fint_{F} |w(x)- \frakp(x)| \diff x,
  \]
  where we have used that $w_{|F} = 0$. The scaled trace inequality \eqref{eq:ScaledTrace} then yields
  \begin{multline*}
    \fint_{F} |w(x)- \frakp(x)| \diff x \lesssim \fint_{S_T} | w(x) - \frakp(x)| \diff x + h_T \fint_{S_T} |\GRAD( w - \frakp)(x)| \diff x \\
    \leq h_T^{-d} \omega^{-1}(S_T)^{\frac{1}{2}} \left[ \| w - \frakp \|_{L^2(\omega,S_T)} + h_T \| \GRAD (w-\frakp)\|_{L^2(\omega,S_T)} \right].
  \end{multline*}

To summarize, we have arrived at the following estimate:
  \begin{align*}
    \| \wp_h w - \frakp \|_{L^2(\omega,T)}^2 &\lesssim \frac{\omega(T)\omega^{-1}(S_T)}{h_T^{2d}} \left[ \| w - \frakp \|_{L^2(\omega,S_T)} + h_T \| \GRAD (w-\frakp)\|_{L^2(\omega,S_T)} \right]^2 \\
    &\lesssim [\omega]_{A_2} \left[ \| w - \frakp \|_{L^2(\omega,S_T)} + h_T \| \GRAD (w-\frakp)\|_{L^2(\omega,S_T)} \right]^2.
  \end{align*}
\end{enumerate}

All properties have been proved. This concludes the proof.
\end{proof}

Next, we obtain an analogue of the bound \eqref{eq:local_approximation}.

\begin{proposition}[interpolation error estimate]
\label{prop:InterpObstacle}
Let $\Phi$ be an $N$-function that satisfies Assumption~\ref{ass:equivalence}, and let $\omega \in A_{i(\Phi)}(\Omega)$. Let us assume that $\T = \{ T \}$ is such that no simplex $T$ has more than one $(d-1)$-dimensional subsimplex on $\partial \Omega$. If $w \in W^{1,\Phi}_0(\omega,\Omega)$ is such that $\GRAD \bV(\GRAD w ) \in L^2(\omega,\Omega)$ and $h$ is sufficiently small, then
\[
  \|\bV(\GRAD w) - \bV(\GRAD \wp_h w) \|_{L^2(\omega,\Omega)}  \lesssim h \| \GRAD \bV(\GRAD w) \|_{L^2(\omega,\Omega)},
\]
where the constant depends on $[\omega]_{A_{i(\Phi)}}$ and $\|\omega\|_{L^\infty(\calG)}$, but is independent of $h$.
\end{proposition}
\begin{proof}
We begin by noting that for every $T \in \T$ and every $\frakp \in \polP_1(\R^d)$, we have
\begin{align*}
  \int_T \omega(x) |\bV(\GRAD w(x)) - \bV(\GRAD \wp_h w) |^2 \diff x &\lesssim \int_T \omega(x) |\bV(\GRAD w(x)) - \bV(\GRAD \frakp) |^2 \diff x \\
  &+ \int_T \omega(x) |\bV(\GRAD \frakp) - \bV(\GRAD \wp_h w) |^2 \diff x \eqqcolon \bI + \bI\bI.
\end{align*}

Define the sets $
  \T^{\mathrm{in}} \coloneqq \left\{ T \in \T: T \cap \partial\Omega = \emptyset \right\}
$
and
$
  \T^{\partial} \coloneqq \left\{ T \in \T: T \cap \partial\Omega \neq \emptyset \right\}
$. 
We now partition $\T = \T^{\mathrm{in}} \sqcup \T^{\partial}$.

Let $T \in \T^{\mathrm{in}}$. The local invariance of $\wp_h$, proved in Lemma~\ref{lem:propChenRhn}, implies that $\wp_h \frakp_{|T} = \frakp_{|T}$. Using the equivalences \eqref{eq:fundamental_1}--\eqref{eq:fundamental_2} and the weighted Orlicz stability of $\wp_h$, which was derived in Lemma~\ref{lem:propChenRhn}, we obtain the estimate
\begin{align*}
  \bI\bI &\lesssim \int_T \omega(x) \Phi_{|\GRAD \frakp|}(|\GRAD \wp_h (w - \frakp)(x)| ) \diff x
    \lesssim \int_{S_T} \omega(x) \Phi_{|\GRAD \frakp|}(|\GRAD(w - \frakp)(x)| \diff x \\
    &\lesssim \int_{S_T} \omega(x) |\bV(\GRAD w(x)) - \bV(\GRAD \frakp) |^2 \diff x.
\end{align*}
It follows that for every $T \in \T^{\mathrm{in}}$ the following bound applies:
\[
  \int_T \omega(x) |\bV(\GRAD w(x)) - \bV(\GRAD \wp_h w) |^2 \diff x \lesssim \int_{S_T} \omega(x) |\bV(\GRAD w(x)) - \bV(\GRAD \frakp) |^2 \diff x,
\]
where $\frakp \in \polP_1(\R^d)$ is arbitrary. Let $\bQ \in \mathbb{R}^d$ and choose $\bz = \GRAD \frakp \in \R^d$, so that $\bV(\bz) = \bQ$.
This is possible because $\bV$ is surjective. This gives us for every $T \in \T^{\mathrm{in}}$ 
\begin{equation}
\label{eq:InterpObstacleInterior}
  \begin{aligned}
    \int_T \omega(x) |\bV(\GRAD w(x)) - \bV(\GRAD \wp_h w) |^2 \diff x &\lesssim \int_{S_T} \omega(x) |\bV(\GRAD w(x)) - \bQ |^2 \diff x \\
    &\lesssim h_T^2 \int_{S_T} \omega(x) |\GRAD \bV(\GRAD w(x))|^2 \diff x,
  \end{aligned}
\end{equation}
where we used the weighted Poincar\'e inequality \eqref{eq:wPoincare2} in the last step, by choosing the vector $\bQ$ accordingly, and that by shape regularity $\diam(T) \simeq \diam(S_T)$.

We now consider $T \in \T^\partial$. If $h$ is sufficiently small, it is not a mistake to assume that $T \subset \calG$. Recall that $\calG$ is as in Definition \ref{def:A_p(Omega)}. Let $F \subset T$ be the unique $(d-1)$-dimensional subsimplex such that $F = T \cap \partial\Omega$ and denote by $\bzeta$ the unit outer normal to $T$ on $F$. Now let $\{\textv_i^T\}_{i=0}^d$ be the set of vertices of $T$ and assume that $\{\textv_i^T\}_{i=0}^d$ is numbered such that $F = \conv\{\textv_i^T\}_{i=0}^{d-1}$. Define $\frakp \in \polP_1(\R^d)$ as $\frakp(x) = b \bzeta\cdot(x-\textv_0^T)$, where $b \in \R$ is to be chosen. We note that, by construction, $\frakp_{|F} = 0$. Moreover, we have by definition that $\wp_h\frakp_{|F} = 0$. Finally, the symmetry of $\wp_h$ proved in Lemma~\ref{lem:propChenRhn} implies that $\wp_h \frakp(\textv_d^T) = \frakp(\textv_d^T)$. From this follows that $\wp_h \frakp_{|T} = \frakp_{|T}$.

Based on the constructions described above, the weighted Orlicz stability of $\wp_h$ (see Lemma~\ref{lem:propChenRhn}) allows us to treat $\bI\bI$ as before and obtain that, for every $T \in \T^\partial$,
\[
  \int_T \omega(x) |\bV(\GRAD w(x)) - \bV(\GRAD \wp_h w) |^2 \diff x \lesssim \int_{S_T} \omega(x) |\bV(\GRAD w(x)) - \bV(\GRAD \frakp) |^2 \diff x,
\]
where $\frakp \in \polP_1(\R^d)$ is such that $\frakp_{|F} =0$, but is otherwise arbitrary.

Let us now use the fact that $w_{|F}=0$ to deduce that $\GRAD w$ and $\bzeta$ are parallel on $F$. This implies that $\bV(\GRAD w(x))_{|F} = v(x) \bzeta$ for some $v: F \to \R$. Now, given $x \in T$, we denote by $\hat x \in F$ the projection of $x$ onto $F$. We note that, due to shape regularity, $|\textv_d^T - \hat{\textv}_d^T| \simeq h_T$. Define $Q$ to be the prism with base $F$ and height $H \coloneqq |\textv_d^T - \hat{\textv}_d^T|$. We may now extend $v$ to $Q$ via
\[
  v(x) = v(\hat x) \quad \forall x \in Q.
\]
As a final preparatory step, we note that $\bV(\GRAD \frakp) = \kappa \bzeta$, where $\kappa \in\R$ can be chosen arbitrarily by suitably specifying the value $\frakp(\textv_d^T)$. We now estimate
\begin{align*}
  \bI &\lesssim \int_T \omega(x) |\bV(\GRAD w(x)) - v(x)\bzeta|^2 \diff x + \int_T \omega(x) |v(x) - \kappa|^2 \diff x \eqqcolon \bI_1 + \bI_2.
\end{align*}

In order to bound $\bI_1$, we set some notation. We let $\bw(x) = \bV(\GRAD w(x))$. Then,
\begin{align*}
  \bI_1 &\leq \int_Q \omega(x) |\bw(x)-\bw(\hat x)|^2 \diff x
    =\int_Q \omega(x)  \left|\int_0^1\GRAD \bw(tx + (1-t)\hat x) \cdot (x - \hat x) \diff t  \right|^2 \diff x \\
    & \lesssim \| \omega \|_{L^\infty(\calG)}\int_Q \int_0^1 \left|\GRAD \bw(tx + (1-t)\hat x) \cdot (x - \hat x)\right|^2 \diff t \diff x,
\end{align*}
where we have used the fact that $\omega$ belongs to the reduced class $A_{i(\Phi)}(\Omega)$ (see Definition \ref{def:A_p(Omega)}). We now introduce a coordinate system in which the first axis is aligned with $\bzeta$ and $Q$ is on the half space defined by $\{ x\in\R^d : x_1 \geq 0 \}$. We thus have that, if $x \in Q$, then $x = (x_1, x')^\top$ with $x_1 \geq 0$ and $x' \in F \subset \R^{d-1}$. We also note that $\hat{x} = (0,x')^\top$, $x-\hat x = (x_1,0)^\top$, and $y= tx + (1-t)\hat x = (tx_1,x')^\top$ (see Figure~\ref{fig:guatilla}). With these observations in mind, we continue the bound for the term $\bI_1$ as follows:
\begin{align*}
  \bI_1 &\lesssim \| \omega \|_{L^\infty(\calG)}\int_Q \int_0^1 |\partial_1 \bw(tx_1,x')|^2 |x_1|^2 \diff t \diff x \\
    &= \| \omega \|_{L^\infty(\calG)} \int_0^H x_1^2 \int_F \int_0^1 |\partial_1 \bw(tx_1,x')|^2 \diff t \diff x' \diff x_1 \\
    &= \| \omega \|_{L^\infty(\calG)} \int_0^H x_1 \int_F \int_0^{x_1} |\partial_1 \bw(r,x')|^2 \diff r \diff x' \diff x_1 \\
    &\leq \| \omega \|_{L^\infty(\calG)} \int_0^H x_1 \int_F \int_0^H |\partial_1 \bw(r,x')|^2 \diff r \diff x' \diff x_1,
\end{align*}
where we have introduced the change of variables $r = tx_1$ in the innermost integral and used the fact that if $x \in Q$, then $x_1 \in [0,H]$. Now we just integrate with respect to $x_1$ and use that $H \simeq h_T$ to obtain
\begin{align*}
  \bI_1 &\lesssim \frac12 h_T^2 \| \omega \|_{L^\infty(\calG)} \int_F \int_0^H |\partial_1 \bw(r,x')|^2 \diff r \diff x' \\
    &\leq \frac12 h_T^2 \| \omega \|_{L^\infty(\calG)} \int_F \int_0^H |\GRAD \bw(r,x')|^2 \diff r \diff x' \\
    &= \frac12 h_T^2 \| \omega \|_{L^\infty(\calG)} \int_Q |\GRAD \bw(y)|^2 \diff y \\
    &\leq \frac12 h_T^2 \| \omega \|_{L^\infty(\calG)} \| \omega^{-1} \|_{L^\infty(Q)} \int_Q \omega(y ) |\GRAD \bV(\GRAD w(y ))|^2 \diff y.
\end{align*}
We recall that $Q = [0,H] \times F$ and $\bw(y) = \bV(\GRAD \bw(y))$.

To bound $\bI_2$, we recall that $H = |\textv_d^T - \hat{\textv}_d^T|$ and obtain that
\begin{align*}
  \bI_2 &\leq \| \omega \|_{L^\infty(\calG)} \int_0^H \int_F |v(\hat x) - \kappa |^2 \diff \hat x \diff t
  \lesssim h_T \| \omega \|_{L^\infty(\calG)} \int_F | \tr_F \bw(\hat x) - \kappa \bzeta |^2 \diff \hat x \\
    &\lesssim h_T \| \omega \|_{L^\infty(\calG)} \left[ h_T^{-1} \int_T |\bw(x) - \kappa \bzeta|^2 \diff x + h_T \int_T |\GRAD( \bw(x) - \kappa\bzeta)|^2 \diff x \right] \\
    & \lesssim \| \omega \|_{L^\infty(\calG)} \| \omega^{-1} \|_{L^\infty(T)}
      \left[ \int_T \omega(x)|\bw(x) - \kappa \bzeta|^2 \diff x + h_T^2 \int_T \omega(x) |\GRAD \bw(x)|^2 \diff x \right],
\end{align*}
where we have used the scaled trace inequality \eqref{eq:ScaledTrace}. The weighted Poincar\'e inequality then shows that for every $T \in \T^\partial$,
\begin{equation}
\label{eq:InterpObstacleBoundary}
  \int_T \omega(x) |\bV(\GRAD w(x)) - \bV(\GRAD \wp_h w) |^2 \diff x 
    \lesssim h_T^2 \int_{S_T} \omega(x) |\GRAD \bV(\GRAD w(x))|^2 \diff x,
\end{equation}
where the hidden constant depends on $[\omega]_{A_{i(\Phi)}}$ and $\|\omega\|_{L^\infty(\calG)}$.

It remains to add \eqref{eq:InterpObstacleInterior} and \eqref{eq:InterpObstacleBoundary} and use shape regularity to obtain the claim.
\end{proof}

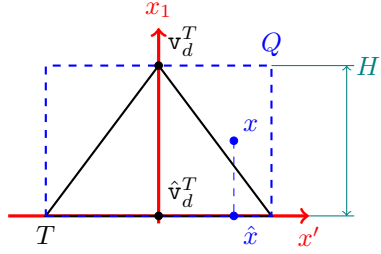
\begin{figure}
  \begin{center}
    \begin{tikzpicture}
      \draw[very thick, red, ->] (0,-0.1) -- (0,2.5) node[above]{$x_1$};
      \draw[very thick, red, ->] (-2,0) -- (2,0) node[below]{$x'$};

      \draw[thick, black] (-1.5,0) -- (1.5,0) -- (0, 2) -- cycle node[below] {$T$};

      \draw[thick, dashed, blue] (-1.5,0) -- (1.5,0) -- (1.5, 2) node[above] {$Q$} -- (-1.5,2) -- cycle;

      \draw[fill=black,black] (0,2) circle[radius=0.05] node[above right, black] {$\textv_d^T$};
      \draw[fill=black,black] (0,0) circle[radius=0.05] node[above right, black] {$\hat{\textv}_d^T$};

      \draw[fill=blue,blue] (1,1) circle[radius=0.05] node[above right, blue] {$x$};
      \draw[fill=blue,blue] (1,0) circle[radius=0.05] node[below right, blue] {$\hat x$};
      \draw[very thin, dashed, blue] (1,1) -- (1,0) ;

      \draw[teal] (2,0) -- (2.6,0);
      \draw[teal] (1.5,2) -- (2.6,2);
      \draw[teal, <->] (2.5,0) -- (2.5,2) node[right, teal] {$H$};
    \end{tikzpicture}
  \end{center}
  \caption{The geometric setting for the bound for $\mathbf{I}_1$ in the proof of Proposition~\ref{prop:InterpObstacle}. The element $T \in \T^\partial$ is shown in \textbf{solid black}, while the related prism is in \textcolor{blue}{\textbf{dashed blue}}. We also show a generic point $x \in Q$ and its projection onto $F$, \ie $\hat x \in F$.}
\label{fig:guatilla}
\end{figure}

\subsection{An error estimate}

We conclude with the following error estimate for the obstacle problem. Recall that we have assumed that the obstacle satisfies $\psi \in C^2(\bar\Omega)$.

\begin{theorem}[error estimate]
Let $\Phi$ be an $N$-function with $i(\Phi) > 2$ that satisfies Assumption~\ref{ass:equivalence}, and let $\omega \in A_2(\Omega)$. Let $u \in W^{1,\Phi}_0(\omega,\Omega)$ and $u_h \in \polV_h$ solve \eqref{eq:VI} and \eqref{eq:VIh}, respectively. Assume that $\bV(\GRAD u) \in W^{1,2}(\omega,\Omega)$, and that $\lambda$, defined in \eqref{eq:DefOfLagrangeMult}, belongs to $L^2(\omega^{-1},\Omega)$. If $\T$ is such that no simplex has more than one $(d-1)$-dimensional subsimplex on $\partial \Omega$ and $h$ is sufficiently small, then
\[
  \| \bV(\GRAD u) - \bV(\GRAD u_h) \|_{L^2(\omega,\Omega)} \lesssim h \| \GRAD \bV(\GRAD u) \|_{L^2(\omega,\Omega)} + h^{\frac{1}{2}} \| \GRAD (u - \psi) \|_{L^2(\omega,\Omega)},
\]
where the hidden constant is independent of $h$.
\end{theorem}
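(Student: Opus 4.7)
The plan is to combine the abstract error bound in Proposition~\ref{prop:FirstBoundObstacle} with the interpolation estimate of Proposition~\ref{prop:InterpObstacle} and the approximation properties of $\wp_h$ collected in Lemma~\ref{lem:propChenRhn}.

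First, I would invoke Proposition~\ref{prop:FirstBoundObstacle}, which splits the squared error into a \emph{consistency} contribution
\[
  \|\lambda\|_{L^2(\omega^{-1},\Omega)}\,\|(u-\psi)-\wp_h(u-\psi)\|_{L^2(\omega,\Omega)}
\]
arising from the obstacle, and a \emph{Galerkin-type} interpolation contribution $\int_\Omega \omega(x) |\bV(\GRAD u(x))-\bV(\GRAD \wp_h u(x))|^2 \diff x$. For the latter, the hypotheses of Proposition~\ref{prop:InterpObstacle} — namely $\GRAD \bV(\GRAD u)\in L^2(\omega,\Omega)$, the mesh condition that no simplex has more than one $(d-1)$-dimensional subsimplex on $\partial\Omega$, $\omega \in A_1(\Omega)$, and $h$ sufficiently small — are all in force, so Proposition~\ref{prop:InterpObstacle} delivers the bound $h^2\|\GRAD \bV(\GRAD u)\|_{L^2(\omega,\Omega)}^2$. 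This already accounts for the squared first term in the target estimate.

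For the consistency contribution, I would use the linearity of $\wp_h$ to write
\[
  (u-\psi)-\wp_h(u-\psi)=(u-\wp_h u)-(\psi-\wp_h\psi),
\]
and treat the two pieces separately. Since $u\in W_0^{1,2}(\omega,\Omega)$ by hypothesis, item~6 of Lemma~\ref{lem:propChenRhn} immediately gives $\|u-\wp_h u\|_{L^2(\omega,T)}\lesssim h_T\|\GRAD u\|_{L^2(\omega,S_T)}$, which sums to $h\|\GRAD u\|_{L^2(\omega,\Omega)}$. The piece $\psi-\wp_h\psi$ is the main difficulty, because $\psi$ does not in general vanish on $\partial\Omega$ yet $\wp_h$ forces the value zero at every boundary vertex; no better than first-order $L^\infty$ convergence can therefore be expected in the strip of boundary elements. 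I would split $\T=\T^{\mathrm{in}}\sqcup\T^{\partial}$ exactly as in the proof of Proposition~\ref{prop:InterpObstacle}. On $T\in\T^{\mathrm{in}}$, the local linear invariance of $\wp_h$ (item~3 of Lemma~\ref{lem:propChenRhn}) together with Bramble--Hilbert and local weighted stability yields $\|\psi-\wp_h\psi\|_{L^2(\omega,T)}\lesssim h_T^2\|D^2\psi\|_{L^\infty(S_T)}\,\omega(S_T)^{1/2}$, summing to an $O(h^2)$ interior contribution. On $T\in\T^{\partial}$, I would combine the hypothesis $\omega\in A_1(\Omega)$ (which guarantees $\omega\in L^\infty(\calG)$ for a neighborhood $\calG$ of $\partial\Omega$, cf.\ Definition~\ref{def:A_p(Omega)}) with the local $L^\infty$ stability of $\wp_h$ (item~4 of Lemma~\ref{lem:propChenRhn}) to bound $\|\psi-\wp_h\psi\|_{L^\infty(T)}\lesssim \|\psi\|_{L^\infty(\Omega)}$, and then exploit that shape regularity gives $\big|\bigcup_{T\in\T^\partial}T\big|\lesssim h$, yielding $\|\psi-\wp_h\psi\|_{L^2(\omega,\bigcup\T^\partial)}\lesssim h^{1/2}\|\psi\|_{C^2(\bar\Omega)}$.

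Combining these pieces, the consistency term is controlled by $h\|\GRAD u\|_{L^2(\omega,\Omega)}+h^{1/2}$ up to a multiplicative constant depending on $\psi$, $\lambda$ and $\omega$. Plugging this back into Proposition~\ref{prop:FirstBoundObstacle} and using $\sqrt{A+B}\leq \sqrt{A}+\sqrt{B}$ to extract the square root produces the claimed bound, with the $h\|\GRAD\bV(\GRAD u)\|_{L^2(\omega,\Omega)}$ term coming from Proposition~\ref{prop:InterpObstacle} and the $h^{1/2}\|\GRAD u\|_{L^2(\omega,\Omega)}$ term from the consistency estimate. The main technical obstacle is precisely the boundary contribution of $\psi-\wp_h\psi$: because $\wp_h$ cannot reproduce nonzero Dirichlet data, this term converges only at rate $h^{1/2}$ in $L^2(\omega,\Omega)$, and it is this mismatch on the boundary strip that fixes the overall $h^{1/2}$ rate in the final estimate.
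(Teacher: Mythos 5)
Your overall strategy matches the paper's: combine Proposition~\ref{prop:FirstBoundObstacle} with Proposition~\ref{prop:InterpObstacle} for the interpolation term, and then estimate the consistency term $\|\lambda\|_{L^2(\omega^{-1},\Omega)}\,\|(u-\psi)-\wp_h(u-\psi)\|_{L^2(\omega,\Omega)}$. Where you differ is in the treatment of $(u-\psi)-\wp_h(u-\psi)$. The paper simply ``invokes the approximation property of the interpolant $\wp_h$'' --- i.e.\ item~6 of Lemma~\ref{lem:propChenRhn} --- applied to $w=u-\psi$. You correctly observe that item~6 is stated (and its proof genuinely uses, via the step ``$w_{|F}=0$'') for $w\in W^{1,2}_0(\omega,\Omega)$, whereas $u-\psi$ restricted to $\partial\Omega$ equals $-\psi$, which is only assumed $\geq 0$ and need not vanish. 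This is a real subtlety that the paper's terse proof glosses over, and your decomposition $(u-\psi)-\wp_h(u-\psi)=(u-\wp_h u)-(\psi-\wp_h\psi)$ is the natural way to expose it.

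However, your handling of the piece you identify as the main difficulty does \emph{not} rescue the claimed rate, and your final arithmetic is wrong. You correctly bound $\|\psi-\wp_h\psi\|_{L^2(\omega,\bigcup\T^\partial)}\lesssim h^{1/2}$ (this is sharp when $\psi|_{\partial\Omega}\not\equiv 0$: the $L^\infty$ mismatch on a boundary element is $O(1)$, and the strip has measure $O(h)$). But in Proposition~\ref{prop:FirstBoundObstacle} this quantity enters the bound for $\|\bV(\GRAD u)-\bV(\GRAD u_h)\|_{L^2(\omega,\Omega)}^2$ to the \emph{first} power, multiplied by $\|\lambda\|_{L^2(\omega^{-1},\Omega)}$. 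After extracting the square root with $\sqrt{A+B}\leq\sqrt A+\sqrt B$, a consistency term of size $h^{1/2}$ yields $h^{1/4}$ in the final error, not $h^{1/2}$. Your last paragraph asserts that the boundary-strip contribution ``fixes the overall $h^{1/2}$ rate,'' but your own estimates give $h^{1/4}$ from that piece; the $h^{1/2}$ contribution in the theorem actually comes from $\sqrt{h\|\GRAD u\|}$, which is the $u-\wp_h u$ piece. To obtain the stated $O(h^{1/2})$ rate one must have $\|(u-\psi)-\wp_h(u-\psi)\|_{L^2(\omega,\Omega)}=O(h)$, which is exactly what the paper's invocation of Lemma~\ref{lem:propChenRhn}(6) gives --- but it tacitly requires $u-\psi\in W^{1,2}_0(\omega,\Omega)$, i.e.\ $\psi|_{\partial\Omega}=0$. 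So either that restriction on $\psi$ is implicit (in which case your boundary-strip analysis is unnecessary and your splitting collapses to the paper's one-line argument), or the boundary mismatch you have flagged genuinely downgrades the rate to $h^{1/4}$ and the consistency term would need to be localized to the coincidence set (where $\lambda$ is supported, away from $\partial\Omega$ when $\psi<0$ there) rather than estimated by the global Cauchy--Schwarz bound of Proposition~\ref{prop:FirstBoundObstacle}.
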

\begin{proof}
We begin with the observation that, since $i(\Phi) > 2$, we have that $A_2 \subset A_{i(\Phi)}$, and thus that $\omega \in A_{i(\Phi)}(\Omega)$. We may thus apply Proposition~\ref{prop:InterpObstacle} to the conclusion of Proposition~\ref{prop:FirstBoundObstacle} and obtain
\[
  \| \bV(\GRAD u) - \bV(\GRAD u_h )\|_{L^2(\omega,\Omega)}^2 \lesssim h^2 \| \GRAD \bV(\GRAD u) \|_{L^2(\omega,\Omega)}^2 + \| [u-\psi] - \wp_h[u-\psi] \|_{L^2(\omega,\Omega)}.
\]
It remains to use the approximation property of the positivity preserving interpolant $\wp_h$ proved in Lemma~\ref{lem:propChenRhn} to be able to conclude.
\end{proof}

\begin{remark}[regularity]
Note that the error estimate depends on the assumption that $\bV(\GRAD u) \in W^{1,2}(\omega,\Omega)$. In addition, we need the regularity assumption $\lambda \in L^2(\omega^{-1},\Omega)$. We leave the exploration of these properties as open conjectures and refer the reader to \cite{MR4487747,MR4653772,MR3829549,MR4503149,MR4649160,MR4396691} for some results in this direction.
\end{remark}

\bibliographystyle{siamplain}
\bibliography{biblio}

\end{document}